\newcommand{\citep}{\cite}
\newtheorem{theorem}{Theorem}[section]
\newtheorem{lemma}{Lemma}
\newtheorem{assumption}{Assumption}
\newtheorem{remark}{Remark}
\newtheorem{definition}[theorem]{Definition}
\title{\LARGE \bf
Impacts of Network Topology on the Performance of a Distributed Algorithm Solving Linear Equations
}
\author{Hong-Tai Cao$^{1}$, Travis E. Gibson$^{2}$, Shaoshuai Mou$^{3}$ and Yang-Yu Liu$^{4}$ 
\thanks{$^{1}$Department of Electrical Engineering, University of Southern California, Los Angeles, California 90089, USA. Channing Division of Network Medicine, Brigham and Women's Hospital, Harvard Medical School, Boston, Massachusetts 02115, USA. Email:caohongtai2014@gmail.com}%
\thanks{$^{2}$Channing Division of Network Medicine, Brigham and Women's Hospital, Harvard Medical School, Boston, Massachusetts 02115, USA. Email:tgibson@mit.edu}
\thanks{$^{3}$School of Aeronautics {\rm \&} Astronautics, Purdue University, West Lafayette, Indiana 47907, USA.  Email:mous@purdue.edu}
\thanks{$^{4}$Channing Division of Network Medicine, Brigham and Women's Hospital, Harvard Medical School, Boston, Massachusetts 02115, USA. Center for Cancer Systems Biology, Dana-Farber Cancer Institute, Boston, Massachusetts 02115, USA. Email: yyl@channing.harvard.edu}
}
\begin{document}

\maketitle
\thispagestyle{empty}
\pagestyle{empty}

\begin{abstract}
Recently a distributed algorithm has been proposed for multi-agent networks to solve a system of linear algebraic equations, by assuming each agent only knows part of the system and is able to communicate with nearest neighbors to update their local solutions. This paper investigates how the network topology impacts exponential convergence of the proposed algorithm. It is found that networks with higher mean degree, smaller diameter, and homogeneous degree distribution tend to achieve faster convergence.  Both analytical and numerical results are provided.

\end{abstract}

\section{Introduction}
A major goal in studying networked systems is to understand the impact of network topology within the context of the application of interest, from epidemic spreading \citep{pastor2001epidemic,cohen2000resilience} to synchronization \citep{nishikawa2003heterogeneity,wang2005partial}, controllability \citep{liu2011controllability,jadbabaie2004stability,pasqualetti2014controllability} , observability \citep{liu2013observability}, flocking \citep{vicsek1995novel,jadbabaie2003coordination} and consensus \citep{tsitsiklis1984problems,tsitsiklis1986distributed,murray2003consensus,olfati2007consensus}.

Recently, Mou \emph{et al.} proposed a network-based distributed algorithm to solve for $x$ in the linear equation $\mathbf{A}x=b$ \citep{mou2013fixed,mou2014distributed}. In this algorithm it is assumed that each agent is located in a communication network and has partial knowledge of $\mathbf{A}$ and $b$. Under mild conditions on the connectivity of the underlying network, all the agents' states (or local solutions) converge to the exact solution $x=\mathbf{A}^{-1}b$ \citep{mou2013fixed,liu2013asynchronous,mou2014distributed,anderson2015decentralized,mou2015distributed}.

The proposed algorithm in \citep{mou2014distributed} is distributed, applicable for all linear equations as long as they have solutions, works for time-varying networks, converges exponentially fast, operates asynchronously, and does not involve any small step-size. The aim of this paper is to further characterize the relation between its exponential convergence and the network topology. The main contribution of this work is an analytical bound that connects the convergence rate of the algorithm to the network topology and the linear equation. Both theoretical and numerical results show that networks with higher mean degree, smaller diameter, and homogeneous degree distributions tend to speed up this distributed algorithm.

The following notation is used throughout the paper. The $\ell^{2}$-norm is denoted as $\|\cdot\|$. Matrices are denoted by upper case letters in bold such as $\mathbf{A}$ and $\mathbf{P}$. A partition of a matrix is denoted by an upper case letter with a subscript, i.e. $A_i$ is a partition of matrix $\mathbf{A}$, which can also be a row vector. Vectors are denoted by lower case italic letters, such as $x$, $y$, $z$. A network or graph is denoted as $\mathcal{G}(\mathcal{V}, \mathcal{E})$, where $\mathcal{V}$ is the node (or vertex) set and $\mathcal{E}$ is the link (or edge) set. The network topology is represented by the adjacency matrix $\mathcal{A}=\{\alpha_{ij}\}$ of the network. This paper is organized as follows. The network-based distributed algorithm is briefly presented in Section \ref{sec_alg}. The theory of how the network topology impacts the algorithm performance is present in Section \ref{sec_topo}. The main proof is presented in Section \ref{sec_proof}. Finally, the conclusion is presented in Section \ref{sec_conclu}.
\\
\section{A Distributed Algorithm for Solving Linear Equations}
\label{sec_alg}
Consider a system of linear algebraic equations
\begin{equation}
\label{Ax=b}
\mathbf{A}x =b,
\end{equation} which has a unique solution $x^*$. Here $\mathbf{A} \in \mathbb{R}^{\mathit{n}\times\mathit{n}}$, $b \in \mathbb{R}^{\mathit{n}}$ and $x\in \mathbb{R}^{n}$. The partition of the matrix $\mathbf{A}$ is defined as $\mathbf{A}=\mathrm{col}\left\lbrace A_1, A_2,\cdots, A_m \right\rbrace$, where $\mathrm{col}\{\cdot\}$ is an operator that stacks elements into a column, $A_i\in \mathbb{R}^{n_i\times n}$, and the partition of the vector $b$ is defined as $b=\left[ b_1, b_2, \cdots, b_m \right]^{\mathrm{T}}$, $b_i\in \mathbb{R}^{n_i}$, where $\sum_{i=1}^{m}n_i=n$. Assume that the entire system $\left(\mathbf{A},b\right)$ is unavailable to a single agent; instead different partitions of the system $\left( A_i^{n_i \times n},b_i^{n_i}\right)$ are available to different agents. In this paper we consider the simplest case: $n_i=1$ and $m=n$, i.e. each agent knows exactly one row of $\mathbf{A}$ matrix and one element of the $b$ vector.

The distributed algorithm proposed in \citep{mou2014distributed} computes the solution of the linear equation \eqref{Ax=b} through a multi-agent network $\mathcal{G}(\mathcal{V}, \mathcal{E})$, where $\mathcal{V}=\{1,2,\cdots, n\}$ and $\mathcal{E}\subseteq \mathcal{V} \times \mathcal{V}$. The topology of this $n$-agent network is represented by its adjacency matrix $\mathcal{A}(\mathcal{G})=\left[ \alpha_{ij}\right]_{n\times n}$ with
\begin{equation*}
\alpha_{ij}= \left\lbrace
\begin{aligned}
& 1\ \mathrm{if}(i,j)\in \mathcal{E} \\
& 0\ \mathrm{otherwise.}
\end{aligned}\right.
\end{equation*} Agent $i$ in the network is synonymous with vertex $i$ in the graph $\mathcal{G}(\mathcal{V}, \mathcal{E})$. The topology of the multi-agent network is completely independent of the linear equation in \eqref{Ax=b}.

For simplicity we make the following assumption:
\begin{assumption}
The graph $\mathcal{G}$ is undirected and connected. Every vertex has a self loop and there are no multiple edges between two vertices.
\end{assumption}

Consider agent $i$ who knows $\left( A_i,b_i \right)$. It calculates its local solution $x_i\in \mathbb{R}^n$ to $A_ix_i=b_i$ and exchanges the solution $x_i$ with its neighbors, denoted as $\mathcal{N}_{i}=\{j\in \mathcal{V}|(i,j)\in \mathcal{E}\}$. In this work $t$ is the discrete time variable and takes values in $\{0,\ 1,\ 2,\cdots\}$. The exact (or global) solution to $\mathbf{A}x=b$ is obtained when all the local solutions $x_i$'s reach consensus through the following iteration procedure:
\begin{equation}
\label{x(t+1)=Mx(t)}
x_i(t+1) =x_i(t)-\frac{1}{d_i}\mathbf{P}_i\left(d_ix_i(t)-\sum_{j \in \mathcal{N}_{i}}x_j(t)\right),
\end{equation}
where $\mathbf{P}_i =\mathbf{I}-A_i^{\mathrm{T}}{\left(A_i\cdot A_i^{\mathrm{T}}\right)}^{-1}A_i$ is the orthogonal projection on the kernel of $A_i$, $i=1,\cdots,n$, and $d_i=\sum_{j=1}^n \alpha_{ij}$ is the degree of agent $i$.

Let $x^*$ be the true solution to \eqref{Ax=b} and it must satisfy $A_ix^*=b_i$ for $i=1,\cdots,n$. Define the error between $x_i(t)$ and $x^*$ as
\begin{equation}
\label{y_i}
y_i(t)=x_i(t)-x^*,
\end{equation} which is in the kernel of $A_i$. In addition, note that $\mathbf{P}_i^2=\mathbf{P}_i$ and $\mathbf{P}_iy_i(t)=y_i(t)$. Replacing $x_i(t+1)$ and $x_i(t)$ by $y_i(t+1)$ and $\mathbf{P}_iy_i(t)$ in \eqref{x(t+1)=Mx(t)}, we get the {\em error updating equation}
\begin{equation}
\label{y+=my}
y_i(t+1) =\frac{1}{d_i}\mathbf{P}_i\sum_{j \in \mathcal{N}_{i}}\mathbf{P}_jy_j(t),
\end{equation}
for $i=1,\cdots,n$. These $n$ equations can be rewritten in the following compact form
\begin{equation}
\label{y=My}
y(t) =\left(\mathbf{P}_{\mathrm{diag}}\left[\left(\mathbf{D}^{-1}\mathcal{A}^{\mathrm{T}}\right)\otimes \mathbf{I}\right]\mathbf{P}_{\mathrm{diag}}\right)^t y(0) =\mathbf{M}^t y(0),
\end{equation}
where the matrix $\mathbf{M}$ is called the {\em updating matrix} and $y(t)=\mathrm{col}\left\lbrace y_1(t),y_2(t),\cdots,y_n(t)\right\rbrace$. The matrix $\mathbf{P}_{\mathrm{diag}}=\mathrm{diag}\{\mathbf{P}_1,\mathbf{P}_2,\cdots,\mathbf{P}_n\}\in \mathbb{R}^{n^2 \times n^2}$ is a block diagonal matrix with $\mathbf{P}_i \in \mathbb{R}^{n \times n}$ and $\mathbf{D}=\mathrm{diag}\{d_1,d_2,\cdots,d_n\}$ is a diagonal matrix. The operator $\otimes$ is the kronecker product \citep{neudecker1969note}.

This algorithm has been proven to converge by using the mixed norm \citep{russo2013contraction} \citep[Chapter 4.3.1]{mou2014distributed} of $\mathbf{M}$ defined as
\begin{equation*}
\|\mathbf{M}\|_{\mathrm{mix}}=\| \mathbf{Q} \|_\infty,
\end{equation*} where $\mathbf{Q}=\{q_{ij}\}$, $q_{ij}=\frac{\alpha_{ij}}{d_i}\|\mathbf{P}_i\mathbf{P}_j\|$. Indeed, $\mathbf{M}^t$ satisfies $\lim_{t\to \infty}\|\mathbf{M}^t\|_{\mathrm{mix}}=0$ if the undirected multi-agent network is connected \citep{mou2014distributed}. Therefore $y=\mathbf{M}^ty(0)\to 0$ and thus $x_i\to x^*$ for all $i\in \mathcal{V}$.

Network properties play important roles in consensus problems. In particular, the second smallest eigenvalue $\lambda_2(\mathcal{L})$ of the graph laplacian bounds the convergence rate of consensus \citep{fiedler1973algebraic,olfati2007consensus}. Given the fact that projection matrices $\mathbf{P}_i$'s are used in constructing the updating matrix $\mathbf{M}$, it is not clear how the network topology $\mathcal{A}$ impacts the convergence rate of this algorithm. Thus, in this work we approach the proof of convergence from a different angle.

\section{Impacts of Network Topology on the Distributed Algorithm}
\label{sec_topo}
\subsection{Theoretical Analysis}
In this section, we study how network topology impacts the performance of the network-based distributed algorithm. Before we state the main theorem, we introduce the following definitions.

\begin{definition}[Walk]
In a graph $\mathcal{G}$, a walk $w^{l}\in \mathcal{V}^{l+1}$ \citep{chung1997spectral} of length $l$ is a sequence of vertices $(v_0,v_1,\cdots,v_l)$ with $\{v_{i-1},v_{i}\}\in \mathcal{E}(\mathcal{G})$ for all $1\leqslant i \leqslant l$ when $l \geqslant 1$. If $l=0$, then $w^{0}$ is simply a vertex $v_0$. Specifically, we denote a walk of length $l$ starting at vertex $v_0$ and ending at vertex $v_{l}$ as $w_{v_0 v_l}^{l}$.
\end{definition}

\begin{definition}[$f(w^l,\beta)$ Product of a Walk]
Let $w^{l}$ be a walk of length $l$. Let $\beta_{v_i}\in U$ be a value associated with vertex $v_i$. We can define a function of the walk $w^l$ as
\begin{equation*}
f(w^l,\beta)=\Pi_{i=0}^{i=l}\beta_{v_i},
\end{equation*} where $\beta$ is indexed by the walk $w^l = (v_0, v_1, \cdots, v_l)$ with values $\beta = (\beta_{v_0}, \beta_{v_1}, \cdots, \beta_{v_l})$. The function $f(w^l,\beta)\in U$ is called the {\em product of walk $w^{l}$}. In this work $U$ is either $\mathbb{R}$ or $\mathbb{R}^{n\times n}$.
\end{definition}

\begin{definition}[$\mathbb{S}(l)$ and $\mathbb{S}^{1}(l)$ Spaces]
In a graph $\mathcal{G}$, all the possible walks of length $l$ form the $\mathbb{S}(l)$ Space. Denote a subspace of $\mathbb{S}(l)$ as $\mathbb{S}^{1}(l)$ if and only if
\begin{itemize}
\item the walk $w^{l}$ starts from an arbitrary vertex $v_0$ and ends at $v_{l}$ and visits all the vertices $v_i\in \mathcal{V}$ of $\mathcal{G}$,
\item there does not exist a vertex $v_j\in \mathcal{V}$ that divides $w^{l}$ into two sub-walks, where one walk starts at $v_0$ and ends at $v_j$, the other one starts at $v_j$ and ends at $v_l$, that both of them visit all the vertices $v_i\in \mathcal{V}$ of $\mathcal{G}$.
\end{itemize}
Note that the end vertex of the previous sub-walk and the starting vertex of the following sub-walk are repeated twice when dividing a walk. It is trivial that for $w^{l}$ walks of length $l\leqslant n-1$, they can't be in the $\mathbb{S}^{1}(l)$ subspace.
\end{definition}

\begin{definition}[Order $r$]
If a walk $w^{l}$ can be divided into several walks $w^{l_1}$, $w^{l_2}$, $\cdots$, $w^{l_r}$, where $l_i\geqslant 1$ and $w^{l_i}\in \mathbb{S}^{1}(l_i)$, then all the walks of the same number $r$ form a subspace $\mathbb{S}^{r}(l)$ where $r$ is called the {\em order} of the space. We also say that $r$ is the {\em order} of the walk $w^{l}$. $\mathbb{S}^r(l) \subsetneq\mathbb{S}(l)$ for any order $r$.

If a walk $w^l$ does not visit all the vertices in a graph $\mathcal{G}$, then its order is $r=0$ and it is in $\mathbb{S}^{0}(l)$. This special case means that there exists at least one vertex $v_i\in \mathcal{V}$ which does not appear in the sequence of the walk $w^{l}$. The order of any $w^{l}$ walk is uniquely determined and non-negative, i.e. $r\geqslant 0$.
\end{definition}

Let $\varphi=\frac{1}{\left(\sqrt{n}\tau\|\mathbf{A}^{-1}\|\right)^2}$, $\tau=\underset{i}{\max}\left(\|A_i\|\right)$,  $\frac{1}{d}=\left(\frac{1}{d_{i}},\frac{1}{d_{v_1}},\cdots,\frac{1}{d_j}\right)$ be indexed by the walk $w_{ij}^{t}=\left(i,v_1,\cdots,v_{t-1},j\right)$ which starts at agent $i$ and ends at agent $j$ where $w_{ij}^{t}\in \mathcal{V}^{t+1}$, then we have the following theorem
\begin{theorem}[Convergence Bound]
\label{th_bound}
Given a linear equation $\mathbf{A}x=b$, $\mathbf{A}=\mathrm{col}\{A_i\}\in \mathbb{R}^{n \times n}$ and its unique solution $x^*$, let $x_{i}(t)$ be the local solution at agent $i$ located in an undirected network $\mathcal{G}(\mathcal{V},\mathcal{E})$ whose adjacency matrix is $\mathcal{A}=\{\alpha_{ij}\}$, then the error $y_{i}(t)$ defined in \eqref{y_i} is bounded as
\begin{equation}
\label{bound_of_th}
\|y_{i}(t+1)\| \leqslant \sum_{\mathcal{N}_j}\sum_{r=0}^{r_m(t)}\sum_{w_{ij}^{t}\in \mathbb{S}^{r}} f(w_{ij}^t,\frac{1}{d})\left(1-\varphi \right)^{\frac{nr}{2}} \|y_j(0)\|
\end{equation} for $i=1,\cdots, n$. Here $r_m(t)\leqslant \lfloor\frac{t}{n}\rfloor$ is the maximum order of the product. Note that $w_{ij}^{0}=w_{ii}^{0}=\left(i\right)$ and $w_{ij}^{1}=\left(i,j\right)$.
\end{theorem}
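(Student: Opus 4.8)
The plan is to unroll the error recursion \eqref{y+=my} into an explicit sum over walks, push the norm through that sum, and then bound the operator norm of every projection product that appears in terms of the order $r$ of the walk indexing it.

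First I would iterate \eqref{y+=my}, substituting repeatedly until the initial condition $y(0)$ is reached. Because $\mathbf{P}_j^2=\mathbf{P}_j$ and $\mathbf{P}_j y_j(0)=y_j(0)$, each substitution collapses the doubled projection at the incident vertex, so one obtains
\begin{equation*}
y_i(t+1)=\sum_{j}\ \sum_{w_{ij}^{t}} f\!\left(w_{ij}^{t},\tfrac1d\right)\ \mathbf{P}_{v_0}\mathbf{P}_{v_1}\cdots\mathbf{P}_{v_t}\ y_j(0),
\end{equation*}
where the inner sum runs over all length-$t$ walks $w_{ij}^{t}=(v_0{=}i,v_1,\dots,v_t{=}j)$ in $\mathcal{G}$ (the adjacency factors $\alpha_{v_{k-1}v_k}$ being subsumed in the restriction to genuine walks) and $f(\cdot,\tfrac1d)$ is the product-of-walk of the degree reciprocals. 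Applying the triangle inequality and submultiplicativity of $\|\cdot\|$ gives
\begin{equation*}
\|y_i(t+1)\|\ \leqslant\ \sum_{j}\ \sum_{w_{ij}^{t}} f\!\left(w_{ij}^{t},\tfrac1d\right)\ \big\|\mathbf{P}_{v_0}\cdots\mathbf{P}_{v_t}\big\|\ \|y_j(0)\|.
\end{equation*}

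Next I would regroup the walks by order. Every walk has a unique order $r\geqslant0$; a walk in $\mathbb{S}^1$ has length at least $n$, so concatenating $r$ of them (the shared junction vertex being counted in both constituents) produces a walk of length at least $rn$, whence only $r\leqslant\lfloor t/n\rfloor$ occurs, which is $r_m(t)$. Thus the displayed bound matches the right-hand side of \eqref{bound_of_th} once we establish, for any walk $(v_0,\dots,v_t)$ of order $r$,
\begin{equation*}
\big\|\mathbf{P}_{v_0}\mathbf{P}_{v_1}\cdots\mathbf{P}_{v_t}\big\|\ \leqslant\ (1-\varphi)^{nr/2}.
\end{equation*}
Each $\mathbf{P}_v$ is a contraction, so the case $r=0$ is immediate; and an order-$r$ walk factors into $r$ consecutive $\mathbb{S}^1$ sub-walks (the doubled junction projections collapsing by idempotence), so submultiplicativity reduces everything to the single claim: if $w=(v_0,\dots,v_l)$ is a walk visiting every vertex of $\mathcal{G}$, then $\|\mathbf{P}_{v_0}\cdots\mathbf{P}_{v_l}\|\leqslant(1-\varphi)^{n/2}$. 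The quantitative input here is that, since $n_i=1$, $\mathbf{P}_i=\mathbf{I}-\hat a_i\hat a_i^{\mathrm T}$ with $\hat a_i=A_i^{\mathrm T}/\|A_i\|$, so $\|\mathbf{P}_i z\|^2=\|z\|^2-(\hat a_i^{\mathrm T}z)^2$; and for any $z\neq0$,
\begin{equation*}
\frac{\|z\|^2}{\|\mathbf{A}^{-1}\|^2}\ \leqslant\ \|\mathbf{A}z\|^2=\sum_{v=1}^{n}\|A_v\|^2(\hat a_v^{\mathrm T}z)^2\ \leqslant\ n\tau^2\,\max_v(\hat a_v^{\mathrm T}z)^2,
\end{equation*}
so $\max_v(\hat a_v^{\mathrm T}z)^2\geqslant\varphi\|z\|^2$: projecting at a suitable vertex shrinks the squared norm by a factor $(1-\varphi)$. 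Iterating this Kaczmarz-type decrement along a walk covering all $n$ vertices — using the minimality encoded in $\mathbb{S}^1$ to guarantee that one genuinely harvests $n$ such decrements over a single covering — yields $\|\mathbf{P}_{v_0}\cdots\mathbf{P}_{v_l}\|^2\leqslant(1-\varphi)^{n}$. Reassembling (sum over the endpoint $j\in\mathcal{N}$, then over $r=0,\dots,r_m(t)$, then over $w_{ij}^{t}\in\mathbb{S}^{r}$) recovers \eqref{bound_of_th}.

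I expect the main obstacle to be precisely this covering estimate for one $\mathbb{S}^1$ walk: the $\varphi$-good vertex for the running iterate changes under each projection and need not be the vertex the walk visits next, so turning ``the walk sees every vertex'' into ``$n$ essentially independent $(1-\varphi)$-decrements accumulate'' is the delicate step — which is exactly the purpose of the $\mathbb{S}^1$ minimality condition and the order-$r$ bookkeeping. Everything else (the walk expansion, the norm bookkeeping, and the counting that gives $r_m(t)\leqslant\lfloor t/n\rfloor$) is routine once that lemma is in hand.
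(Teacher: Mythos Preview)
Your overall architecture---unrolling \eqref{y+=my} into a walk sum, splitting off the degree-reciprocal product $f(w,\tfrac{1}{d})$, grouping walks by order $r$, and reducing everything to a contraction estimate for a single $\mathbb{S}^1$ sub-walk---is exactly the paper's. The divergence is in how you propose to prove that contraction estimate. You extract $\max_v(\hat a_v^{\mathrm T}z)^2\geqslant\varphi\|z\|^2$ and hope to harvest a factor $(1-\varphi)$ at each of $n$ projections along a covering walk; as you yourself diagnose, this does not close, because the $\varphi$-good vertex depends on the running iterate and the walk does not get to choose its next step.

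The paper sidesteps this by never seeking a uniform per-step decrement. It rewrites each error sequence as a Kaczmarz iterate $z(t)-x^*$ (their \eqref{kacz}--\eqref{pppy=z-x}), records the actual decrement at step $t$ as $|\langle Z_{i(t)},z(t)-x^*\rangle|^2=\theta_{i(t)}(\tau\|\mathbf A^{-1}\|)^{-2}\|z(t)-x^*\|^2$ with $\theta_{i(t)}\geqslant0$, and so writes the squared contraction over an $\mathbb S^1$ sub-walk as $\prod_t\bigl(1-\theta_{i(t)}(\tau\|\mathbf A^{-1}\|)^{-2}\bigr)$. Retaining one factor per vertex $i=1,\dots,n$ and bounding all other factors by $1$, it then invokes the AM--GM inequality together with the \emph{summed} form of your estimate (their Lemma~\ref{lm_inequal}: $\sum_i|\langle Z_i,x\rangle|^2\geqslant(\tau\|\mathbf A^{-1}\|)^{-2}\|x\|^2$, i.e.\ $\sum_i\theta_i\geqslant1$) to obtain $\prod_{i=1}^n\bigl(1-\theta_i(\tau\|\mathbf A^{-1}\|)^{-2}\bigr)\leqslant(1-\varphi)^n$. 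So the idea missing from your plan is to trade the max for the sum and let AM--GM do the averaging over the covering, rather than trying to force a uniform $(1-\varphi)$ decrement at each step; the ``delicate step'' you flag is handled in the paper by this $\theta$-parametrization plus AM--GM, not by the $\mathbb S^1$ minimality delivering $n$ independent $(1-\varphi)$ hits.
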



Theorem \ref{th_bound} provides another method to prove that the distributed algorithm converges to the true solution $x^*$ besides the mixed norm method in \citep{mou2014distributed}, which is discussed at the end of this work. The bound in \eqref{bound_of_th} connects the network topology with the convergence rate of the algorithm, by the degree $d_i$ of agent $i$ explicitly, and by counting the number of $w^t\in \mathbb{S}^{r}(t)$ walks in every order $r\geqslant 0$ in the network implicitly. Before moving to the detailed proof of this theorem, we first discuss how topology impacts the performance of the algorithm. To illustrate the topology impacts, we start with the definition of a walk $w^{t}$, then we discuss the properties of the corresponding ${f}(w^t,\frac{1}{d})$ product.

Given a network $\mathcal{G}$ of size $n$, all the possible walks of length $t$ are determined by its adjacency matrix $\mathcal{A}=\{\alpha_{ij}\}$. Let $\frac{1}{d_i}$ be the inverse degree of agent $i$, then the product $\frac{1}{d_{i_0}}\frac{1}{d_{i_1}}\cdots\frac{1}{d_{i_t}}$ can be represented by ${f}^{r}(w_{i_0i_t}^t,\frac{1}{d})$, where we recall that $\frac{1}{d}$ is indexed by the walk $w_{i_0i_t}^{t}$. For simplicity, we let $i=i_0$ and $j=i_t$. Hence given a starting agent $i$, the summation of all products of the walk $w^{1}$ from $i$ to all the agents $j=1,2,\cdots,n$ is represented as $\sum_{j=1}^{n}\frac{\alpha_{ij}}{d_{i}d_{j}}$. In general, we have
\begin{equation*}
\begin{aligned}
\sum_{r=0}^{r_m(t)}\sum_{w_{ij}^{t}} {f}^{r}(w_{ij}^t,\frac{1}{d}) = \sum_{l_{t-1}=1}^{n}\cdots\sum_{l_{1}=1}^{n}\frac{\alpha_{il_1}\alpha_{l_1l_2}}{d_id_{l_1}}\cdots\frac{\alpha_{l_{t-1}j}}{d_{l_{t-1}}}\frac{1}{d_{j}}.
\end{aligned}
\end{equation*} It is trivial that for any $r$, $i$, $j$ and the walk $w_{ij}^t$, $f(w_{ij}^t,\frac{1}{d})\in (0,1)$. We now explore a scenario when the above mentioned sum remains a constant, even if the walk length increases.

Given a network $\mathcal{G}$ and given a starting agent $i$, if all walks $w_{ij}^{t}$, $j=1,2,\cdots,n$ are repeated by walks $w_{ij^{\prime}}^{t+1}$ who visit one more agent $j^{\prime}$ at the end, after reaching agent $j$, then the summation of all ${f}(w_{ij^{\prime}}^{t+1},\frac{1}{d})$ products remains the same. This visit of agent $j^{\prime}$ generates $n$ products based on each $f(w_{ij}^t,\frac{1}{d})$ and each of them equals to $\frac{\alpha_{jj^{\prime}}}{d_{j^{\prime}}}f(w_{ij}^t,\frac{1}{d})$, $j=1,2,\cdots,n$. Only $d_{j}$ out of $n$ products are not zero when $\alpha_{jj^{\prime}}=1$. The summation of all newly generated products is unchanged, which is
\begin{equation}
\label{sum_visit} 
\sum_{\mathcal{N}_{j^{\prime}}} \frac{1}{d_{j^{\prime}}} \sum_{\mathcal{N}_{j}} {f(w_{ij}^t,\frac{1}{d})} = \sum_{\mathcal{N}_{j}} f(w_{ij}^t,\frac{1}{d})
\end{equation} for $\sum_{\mathcal{N}_{j^{\prime}}} =d_{j^{\prime}}$. In general, the summation of all products of all walks by $t+1$ visits starting from a given agent $i$ to all the neigbors of all the agents $j$ is
\begin{equation}
\label{sum_walk}
\begin{aligned}
& \sum_{\mathcal{N}_j}\sum_{r=0}^{r_m(t)}\sum_{w_{ij}^{t}\in \mathbb{S}^{r}(t)} f(w_{ij}^t,\frac{1}{d}) \\
= & \sum_{j^{\prime}=1}^{n}\sum_{j}^{n}\sum_{i_{t-1}}^{n}\cdots \sum_{i_{1}}^{n}\frac{\alpha_{ii_{1}}}{d_{i}}\cdots \frac{\alpha_{i_{t-1}j}}{d_{i_{t-1}}}\frac{\alpha_{jj^{\prime}}}{d_{j}} = 1. \\
\end{aligned}
\end{equation}

Given a network $\mathcal{G}$ and a starting agent $i$, the summation $\sum_{\mathcal{N}_j}\sum_{w_{ij}^{t}\in \mathbb{S}^{0}(t)}f(w_{ij}^t,\frac{1}{d})$ is never increasing and the order $r$ of the $f(w^t,\frac{1}{d})$ product is never decreasing as the walk length $t$ grows. Given an arbitrary $f(w_{i_0i_t}^t,\frac{1}{d})$ product of the walk $w_{i_0i_t}^t\in \mathbb{S}^{0}(t)$, when the walk $w_{i_0i_t}^t$ makes one more visit from agent $i_{t}$ to the next agent $i_{t+1}$, it forms $d_{i_{t}}$ new products and the summation of all $d_{i_{t}}$ products is unchanged, which is already shown in \eqref{sum_visit}. However, there exists a walk of length $t_1$ when there exists at least one walk changing from the $\mathbb{S}^{0}(t_1)$ subspace to the $\mathbb{S}^{0}(t_1+1)$ subspace. For every $w^{t_2}$ walk (of order $r\geqslant 1$) of length $t_2$, it never changes to a walk of order $r=0$. This hold for any walk $w^t\in \mathbb{S}^{0}(t)$, hence the summation of all ${f}(w^t,\frac{1}{d})$, $w^t\in \mathbb{S}^{0}(t)$ product is never increasing, that is
\begin{equation*}
 \sum_{\mathcal{N}_j}\sum_{w_{ij}^{t+1}\in \mathbb{S}^{0}(t+1)}f(w_{ij}^{t+1},\frac{1}{d}) \leqslant  \sum_{\mathcal{N}_j} \sum_{w_{ij}^{t}\in \mathbb{S}^{0}(t)}f(w_{ij}^t,\frac{1}{d})
\end{equation*} and given a walk of length $t$ and a starting agent $i$, the bound in \eqref{bound_of_th} decreases when the order of walks increases, due to the exponential factor $\lim_{r\to \infty}\left(1-\varphi \right)^{\frac{nr}{2}}=0$. Since the summation of all $f$ products starting from a chosen agent $i$ is always $1$ \eqref{sum_walk}, the bound in \ref{th_bound} can only be decreased by either i) for a fixed length $t$, increasing the percentage of walks with higher $r$, or ii) by increasing the order $r$ for all walks as rapidly as possible.

With the above two observations we conclude that given any two networks $\mathcal{G}_1$ and $\mathcal{G}_2$, the distributed algorithm \eqref{x(t+1)=Mx(t)} tends to converge faster on networks $\mathcal{G}_1$ if $\mathcal{G}_1$ and $\mathcal{G}_2$ have similar topology properties except any combinations of the following
\begin{itemize}
\item[1] $\mathcal{G}_1$ has a shorter diameter,
\item[2] $\mathcal{G}_1$ has a more homogeneous degree distribution,
\item[3] $\mathcal{G}_1$ has a higher mean degree.
\end{itemize}

Although Theorem \ref{th_bound} has $\frac{1}{d}$ as a factor in the products, it is not trivial to conclude that higher degree makes the products smaller since higher degree decreases each product while increases the number of  products. The summation of all products remains a constant, as shown in \eqref{sum_walk}. However the bound decreases when the order $r$ of the products increases. We address these three points in order.

\subsubsection{Diameter}
For two graphs $\mathcal{G}_1$ and $\mathcal{G}_2$ with the same degree distribution and hence the same mean degree, if $\mathcal{G}_1$ has a shorter diameter \citep{diestel2005graph} than $\mathcal{G}_2$, then for fixed $t$, walks from $\mathcal{G}_1$ will necessarily have a larger minimum order $r$ as compared to those from $\mathcal{G}_2$. This follows from the fact that all the agents can be visited with fewer steps in a network with shorter diameter. Thus, all things being equal between two graphs, if $r(t)$ increases more rapidly for one graph as opposed to another, the exponential factor $\left(1-\varphi \right)^{\frac{nr}{2}}$  will decrease more rapidly. Therefore networks with shorter diameter make the distributed algorithm converge faster.

\subsubsection{Degree Distribution}
Let $\mathcal{G}_1$ and $\mathcal{G}_2$ be two graphs with same mean degree but different degree distributions. Let $\mathcal{G}_1$ have a more homogeneous degree distribution than $\mathcal{G}_2$. Walks in $\mathcal{G}_2$ typically have lower order $r$ than the walks of the same length in $\mathcal{G}_1$. This is because walks on $\mathcal{G}_2$ rather than $\mathcal{G}_1$ have to walk though the high degree vertices again and again to reach all the other low degree vertices. Hence for a given length of walks, the order $r$ from the walks on $\mathcal{G}_1$ is higher. Therefore homogeneous degree distribution makes the algorithm converges faster.

\subsubsection{Mean Degree}
Adding edges to a graph typically results in a shorter diameter. Given two graphs $\mathcal{G}_1$ and $\mathcal{G}_2$ with similar degree distribution where $\mathcal{G}_1$ has a higher mean degree, the diameter of $\mathcal{G}_1$ is typically no larger than $\mathcal{G}_2$. Hence the orders $r$'s from $\mathcal{G}_1$ are typically higher than those in $\mathcal{G}_2$ for walks of fixed length. Adding a new edge can either make the degree distribution homogeneous or make it heterogeneous, depending on where the new edge is added. The overall change of degree distribution for each newly added edge is difficult to analyze. However, if multiple new edges are added uniformly to a graph, this will typically result in a more homogeneous degree distribution, thus increasing the mean degree of the network makes the distributed algorithm converge faster.

\subsection{Simulation Results}
To verify our theoretical predictions, we perform extensive numerical simulations. We first quantify the convergence rate of the network-based distributed algorithm. One measure is the solution accuracy of the algorithm, which is the Euclidean distance between the local solution and the exact (or global) one:
\begin{equation*}
\epsilon_{i}(t)=\|x_i(t)-x^*\|,\ i=1,2,\cdots,n.
\end{equation*} Smaller $\epsilon_{i}$ means faster convergence rate and hence better algorithm performance. The impacts of different network topologies are measured by the statistical performances of the distributed algorithm, i.e. $E\left(\sum_{i=1}^n\epsilon_{i}\right)$ on an ensemble of linear equations. We notice that the Euclidean distance defined above needs a reference. For example, if the true solutions of two cases are $\|x^{*,1}\|=100$ and $\|x^{*,2}\|=0.1$ respectively, while the summation of Euclidean distances of all local solutions to $x^{*,j}$ are both $\sum_{i=1}^{n}\epsilon_{i}^{j}=\sum_{i=1}^{n}\|x_i^{j}-x^{*,j}\|=1$, $j=1,2$, it is obvious the accuracy of the former iterative process is much higher than the latter one. Therefore the Euclidean distance should be scaled by the initial error $\sum_{i=1}^n \epsilon_{i}(0)$, yielding the relative error
\begin{equation}
\label{rel_err}
R(t)=\frac{\sum_{i=1}^n \epsilon_{i}(t)}{\sum_{i=1}^n \epsilon_{i}(0)}=\frac{\sum_{i=1}^n \|x_i(t)-x^*\|_2}{\sum_{i=1}^n \|x_i(0)-x^*\|_2}.
\end{equation}
In this way, convergence performances among a system of linear equations can be compared.

\begin{figure}
\includegraphics[angle=-90,width=0.5\textwidth]{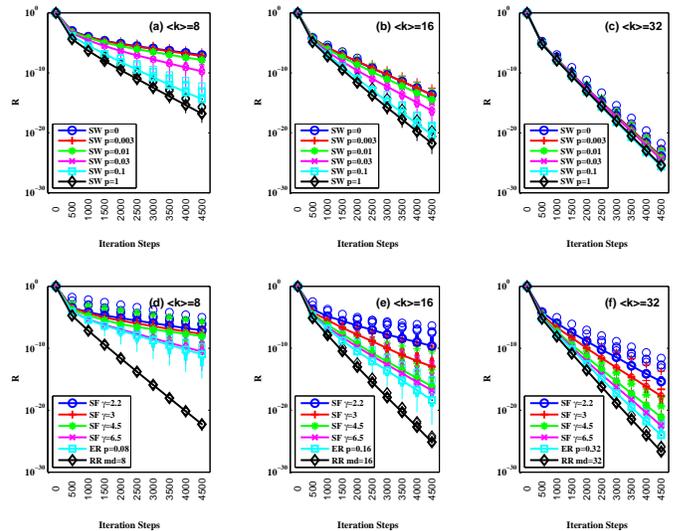}
\caption{\label{fig_cv_box}Impact of network topology on the performance of the network-based distributed algorithm. . Tens of different linear equations are solved by the distributed algorithm on six groups of networks of size $n=100$. The complex networks in each group are (a-c) Small-world (SW) networks; (d-f) Scale-free (SF) networks, Erd\"{o}s-R\'{e}nyi (ER) random graphs, random regular (RR) graphs. In each case, we show the box-and-whisker plots and the median value of the relative error (or convergence rate) $R(t)$ as functions of $t$. At each marked iteration step $t$, a box-and-whisker plot is drawn. The mean degree of the complex networks is represented as $\langle k \rangle$.}
\end{figure}

Figure.~\ref{fig_cv_box} shows the relative error changes with different network topologies, including small-world (SW) networks \citep{watts1998collective} with random rewiring probability $p$, scale-free (SF) networks \citep{barabasi1999emergence} with degree exponent $\gamma$, Erd\"{o}s-R\'{e}nyi (ER) random graphs  \citep{erdos1960evolution} with connectivity probability $p$ and random regular (RR) graphs \citep{wormald1999models} with mean degree $\langle k \rangle$. The networks in each subfigure are the same in their mean degree and they are different on only one parameter. Small-world networks (a-c) are different in rewiring probabilities $p$, which determines network diameters. Scale-free networks, graphs and RR graphs are drastically different in their degree distributions: scale-free networks are most heterogeneous and random regular graphs are most homogeneous.

\begin{figure}
\includegraphics[angle=-90,width=0.5\textwidth]{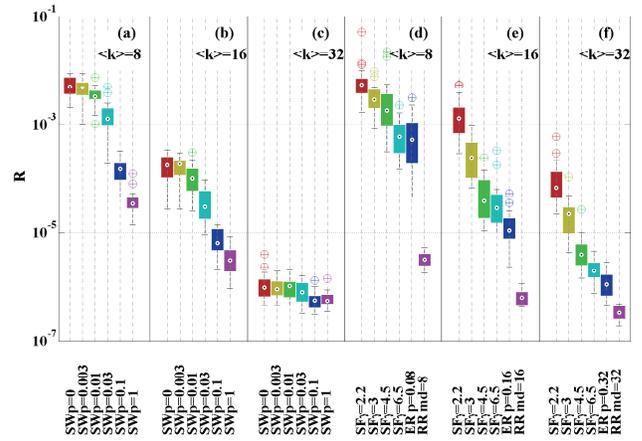}
\caption{\label{fig_cv_last_all}Convergence rate at a chosen time step for complex networks with different topologies. The box-plot shows the relative errors at a given step $T_s=2000$. Networks with similar topological features are grouped together in a particular subfigure.}
\end{figure}

The numerical results shown in Figure.~\ref{fig_cv_box} clearly verify our theoretical predictions, i.e. if two networks share similar topological properties, the one with smaller diameter (or more homogeneous degree distribution, or higher mean degree) perform better than the other. To further demonstrate the topology impacts, consider $R(t)$ at $t=2000$ shown as box-and-whisker plots in Figure.~\ref{fig_cv_last_all}. The smaller relative error $R(t)$ means higher convergence rate. It is clear from Figure.~\ref{fig_cv_last_all}a-c and Figure.~\ref{fig_cv_last_all}d-f that the upper bound of relative errors decreases as the mean degree increases for a given network model. In other words, higher mean degree makes the algorithm reach the true solution faster, and is consistent with our theoretical analysis. Figure.~\ref{fig_cv_last_all}a-c display that small-world networks with higher rewiring probability (and hence smaller diameters) have smaller relative errors $R$ , confirming our theoretical prediction smaller diameter contributes to higher convergence rate. As shown in Figure.~\ref{fig_cv_last_all}d-f, for any given mean degree, the random regular graphs have the smallest relative errors while scale free networks perform the worst. This means that the degree heterogeneity degrades the performance of the network-based distributed algorithm in solving linear equations \eqref{Ax=b}.

\section{Proof of the Bound Theorem}
\label{sec_proof}
Before the formal proof of Theorem \ref{th_bound}, we discuss the structure of the matrix $\mathbf{M}^t$ \eqref{y=My} and introduce some technical lemmas.

Let $m_{ij}^{(1)}\in \mathbb{R}^{n \times n}$ be the $i,j$-th partition matrix of $\mathbf{M}$, then
\begin{equation*}
m_{ij}^{(1)}=\frac{\alpha_{ij}}{d_i} \mathbf{P}_i \cdot \mathbf{P}_j,
\end{equation*} where we recall that $\mathbf{P}_i$ is an orthogonal projection matrix defined right after \eqref{x(t+1)=Mx(t)}. Theses block matrices $m_{ij}^{(1)}$ are actually the updating matrix of $y_i(t)$, which means $y_i(t+1)=\sum_{j=1}^{n}m_{ij}^{(1)}y_j(t)$. Similarly, let $m_{ij}^{(t)}$ denote the partition matrix of $\mathbf{M}^t$, then
\begin{equation*}
\begin{aligned}
m_{ij}^{(t)}
& = \sum_{l_{t-1}=1}^n \cdots \sum_{l_{1}=1}^n m_{il_1} \cdots m_{l_{t-1}j} \\
& = \sum_{l_{t-1}=1}^n \frac{\alpha_{l_{{t-1}}j}}{d_{l_{t-1}}}\cdots \sum_{l_{1}=1}^n \frac{\alpha_{il_{1}}\cdot \alpha_{l_{1}l_{2}}}{d_i\cdot d_{l_{1}}} \mathbf{P}_i \cdots  \mathbf{P}_{l_{t-1}} \mathbf{P}_j.
\end{aligned}
\end{equation*} Although the expression of $m_{ij}^{(t)}$ is long, it shows that $\mathbf{M}^t$ is simply a weighted sum of projection products. It follows that \eqref{y+=my} can be written as $y_i(t)=\sum_{j=1}^n m_{ij}^{(t)}y_j(0)$. Define $\mu_{ij}=\frac{\alpha_{ij}}{d_i}\in \left[0,0.5\right]$, then we have
\begin{equation}
\label{y=mu_p_y}
y_{i}(t) = \sum_{j=1}^n \cdots \sum_{l_{1}=1}^n \mu_{il_{1}} \cdots \mu_{l_{t-1}l_{j}} \mathbf{P}_{i} \cdots  \mathbf{P}_{l_{t-1}}  \mathbf{P}_j y_j(0).
\end{equation} Note that it is a summation of $n^t$ products. We now separate $\mu_{il_{1}} \mu_{l_{1}l_{2}} \cdots \mu_{l_{t-1}l_{j}} \mathbf{P}_{i}  \mathbf{P}_{l_{1}} \cdots \mathbf{P}_{l_{t-1}}  \mathbf{P}_jy_j(0)$ into a {\em $\mu$ product} 
\begin{equation}
\label{lmdlmd}
\mu_{il_{1}} \mu_{l_{1}l_{2}} \cdots \mu_{l_{t-1}j}
\end{equation} and its corresponding projection product with $y_j(0)$, which is called {\em error sequence},
\begin{equation}
\label{pppy}
\mathbf{P}_{i}  \mathbf{P}_{l_1} \cdots \mathbf{P}_{l_{t-1}}  \mathbf{P}_j y_j(0).
\end{equation} From \eqref{sum_visit} the summation of all $\mu$ products \eqref{lmdlmd} satisfies the following equality
\begin{equation}
\label{sum_lmd}
\sum_{j=1}^n\sum_{l_{t-1}=1}^n \cdots \sum_{l_{1}=1}^n \mu_{il_{1}} \mu_{l_{1}l_{2}} \cdots \mu_{l_{t-1}j} = 1.
\end{equation}

The construction of $\mathbf{M}^t$ as a $\mu$ product and an error sequence of projections allows us to separate the topological features from the part of the algorithm that is specific to a particular linear equation. We first analyse each product in the error updating equation \eqref{y=mu_p_y} by bounding the error sequences of \eqref{pppy}.

Define a sequence of vectors $z(t)\in \mathbb{R}^{n}$ as following
\begin{equation}
\label{kacz}
z^{(j)}(t+1)=z(t)+\frac{b_j-A_jz(t)}{\|A_j\|^2}A_j^{\mathrm{T}},
\end{equation} where $t\geqslant 0$ and the superscript $(j)$ corresponds to its row vector $A_j$ and its scaler $b_j$. Then
\begin{equation*}
\begin{aligned}
\mathbf{P}_{i}\left(z(0)-x^*\right)
& = z(0)-\frac{A_iz(0)}{\|A_i\|^2}A_i^{\mathrm{T}}-x^*+ \frac{b_j}{\|A_i\|^2}A_i^{\mathrm{T}}\\
& = z(0)+\frac{b_i-A_iz(0)}{\|A_i\|^2}A_i^{\mathrm{T}}-x^* \\
& = z^{(i)}(1)-x^*.
\end{aligned}
\end{equation*} Let $z^{\left(j\right)}(0)=x_j(0)$, then each error sequence in \eqref{pppy} can be written as
\begin{equation}
\label{pppy=z-x}
\begin{aligned}
  & \mathbf{P}_{i}  \mathbf{P}_{l_1} \cdots \mathbf{P}_{l_{t-2}}  \mathbf{P}_{l_{t-1}}  \mathbf{P}_j y_j(0) \\
= & \mathbf{P}_{i}  \mathbf{P}_{l_1} \cdots \mathbf{P}_{l_{t-2}} \mathbf{P}_{l_{t-1}} \left(z^{(j)}(0)-x^*\right) \\
= & \mathbf{P}_{i}  \cdots \mathbf{P}_{l_{t-2}} \left(z^{(j)}(0)+\frac{b_{l_{t-1}}-A_{l_{t-1}}z^{(j)}(0)}{\|A_{l_{t-1}}\|^2}A_{l_{t-1}}^{\mathrm{T}}-x^* \right) \\
= & \mathbf{P}_{i}  \cdots \mathbf{P}_{l_{t-2}} \left(z^{(jl_{t-1})}(1)-x^*\right) \\
= & z^{(il_{1}\cdots l_{t-2}l_{t-1}j)}(t)-x^*.
\end{aligned}
\end{equation}

Essentially, $z^{(il_{1}\cdots l_{t-2}l_{t-1}j)}(t)$ forms the sequence of $z(t)$ by taking different combinations of orthogonal projection $\mathbf{P}_i$ at different agents, $i=1,2,\cdots,n$. We now show that sequences $z(t)$ can be bounded, so that the error sequence is bounded as well.

We now present two theorems for bounding $z(t)-x^*$, first for the case when the walk $w^t$ is associated with the product $f\left(w^{t}, \mathbf{P}_i\right)$, $w^{t}\in \mathbb{S}^0(t)$, and second for the $f\left(w^{t}, \mathbf{P}_i\right)$ product where $w^{t}\in S^r(t)$ and $r\geqslant 1$.

\begin{theorem}[${f}^{0}$ Bound]
\label{th_Sbar}
For any $w^t\in \mathbb{S}^{0}(t)$ it follows that $\|f(w^t,\mathbf{P})\|\leqslant 1$ and thus $\|f(w^t,\mathbf{P})\|\leqslant 1$. Therefore the dynamics in \eqref{kacz} satisfy the following inequality
\begin{equation}
\|z(t)-x^*\| \leqslant \|z(0)-x^*\|
\end{equation}
\end{theorem}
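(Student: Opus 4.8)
The plan is to reduce the statement to the elementary fact that a composition of orthogonal projections is non-expansive in the $\ell^2$ norm. First I would recall that each factor $\mathbf{P}_i=\mathbf{I}-A_i^{\mathrm{T}}(A_iA_i^{\mathrm{T}})^{-1}A_i$ is symmetric and idempotent --- a genuine orthogonal projection --- so its eigenvalues lie in $\{0,1\}$ and hence $\|\mathbf{P}_i\|\leqslant 1$ for every agent $i$ (in fact $\|\mathbf{P}_i\|=1$, since $A_i\neq 0$ because $\mathbf{A}$ is invertible, but only the upper bound is needed). Then, for an arbitrary walk $w^t=(v_0,\dots,v_t)$ the product of the walk is $f(w^t,\mathbf{P})=\mathbf{P}_{v_0}\mathbf{P}_{v_1}\cdots\mathbf{P}_{v_t}$, and submultiplicativity of the operator norm gives $\|f(w^t,\mathbf{P})\|\leqslant\prod_{k=0}^{t}\|\mathbf{P}_{v_k}\|\leqslant 1$. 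This bound uses nothing about the combinatorial structure of the walk, so in particular it holds for every $w^t\in\mathbb{S}^0(t)$; the restriction to $\mathbb{S}^0(t)$ is recorded here only to set up the contrast with the companion estimate for walks of order $r\geqslant 1$, where the factor will be sharpened to $(1-\varphi)^{nr/2}$.

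Second, I would transfer this estimate to the Kaczmarz-type sequence defined in \eqref{kacz}. By the identity \eqref{pppy=z-x}, any such iterate is obtained by applying the projection product of the indexing walk to the initial error, $z(t)-x^*=f(w^t,\mathbf{P})\,(z(0)-x^*)$, and therefore
\begin{equation*}
\|z(t)-x^*\|=\|f(w^t,\mathbf{P})\,(z(0)-x^*)\|\leqslant\|f(w^t,\mathbf{P})\|\,\|z(0)-x^*\|\leqslant\|z(0)-x^*\|,
\end{equation*}
which is exactly the asserted inequality. Equivalently, one can see this geometrically: one step of \eqref{kacz} is the orthogonal projection of $z(t)$ onto the affine hyperplane $\{v:A_jv=b_j\}$, which contains $x^*$, so the distance to $x^*$ cannot increase --- this is the classical monotonicity of Kaczmarz iterations.

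I do not expect a real obstacle here: the content is precisely the non-expansiveness of products of orthogonal projections. The only points requiring a moment's care are (i) confirming $\|\mathbf{P}_i\|\leqslant 1$, which is immediate once $\mathbf{P}_i$ is recognized as an orthogonal projection, and (ii) invoking \eqref{pppy=z-x} with the walk that actually indexes the iterate in question, so that the operator acting on $z(0)-x^*$ is exactly $f(w^t,\mathbf{P})$. I would also note the apparent typo in the statement, where the inequality $\|f(w^t,\mathbf{P})\|\leqslant 1$ is written twice; only one copy is needed.
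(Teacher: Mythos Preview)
Your proposal is correct and follows essentially the same approach as the paper: the paper's proof is the single sentence ``Given that $\mathbf{P}_i$ is a normalized projection matrix it follows that $\|\mathbf{P}_i\|=1$,'' and you have simply filled in the submultiplicativity step and the transfer to $z(t)-x^*$ via \eqref{pppy=z-x} that the paper leaves implicit. Your observation about the repeated inequality in the statement being a typo is also apt.
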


\begin{proof}
Given that $\mathbf{P}_i$ is a normalized projection matrix it follows that $\|\mathbf{P}_i\|=1$. 
\end{proof}

\begin{theorem}[${f}$ Bound]
\label{th_S}
The sequence $z(t)-x^*$ of the part whose $\mathbf{P}_{i}  \mathbf{P}_{l_1} \cdots \mathbf{P}_{l_{t-1}}  \mathbf{P}_j$ product is an ${f}\left(w^{t},\mathbf{P}\right)$ product where $w^{t}\in \mathbb{S}^{r}(t)$ and $r\geqslant 1$, then all the sequence $\mathbf{P}_{i_1}\mathbf{P}_{i_2}\cdots y_j(0)$ in this part from \eqref{pppy=z-x} can be written as
\begin{equation*}
z(t)-x^* = f\left(w^{t},\mathbf{P}\right)y_j(0),
\end{equation*} where $z(t)-x^*$ consists of several $f(w^{i},\mathbf{P})$, $w^{i}\in \mathbb{S}^{1}(i)$ products. Then all the sequences $z(t)-x^*$ in this part are bounded by
\begin{equation*}
\begin{aligned}
\|z(t)-x^*\|
& \leqslant  \left( 1-\frac{1}{\left(\sqrt{n}\tau\|\mathbf{A}^{-1}\|\right)^2} \right)^{\frac{nr}{2}} \|z(0)-x^*\| \\
& < \left(1-\kappa(\mathbf{A})^{-2}\right)^{ \frac{nr}{2} } \|z(0)-x^*\|,
\end{aligned}
\end{equation*} where $\kappa(\mathbf{A})=\|\mathbf{A}\|\cdot\|\mathbf{A}^{-1}\|$ is the usual condition number of $\mathbf{A}$ and we recall the definition $\tau=\underset{i}{\max}\left(\|A_i\|\right)$.
\end{theorem}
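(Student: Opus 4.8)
The plan is to recognize the auxiliary sequence $z(t)$ of \eqref{kacz} as the cyclic Kaczmarz iteration for $\mathbf{A}x=b$ and then to exploit the $\mathbb{S}^{r}$ decomposition of the walk indexing the projection product. Writing $e(t)=z(t)-x^{*}$, the computation preceding \eqref{pppy=z-x} shows that one update of \eqref{kacz} acts on the error as $e\mapsto\mathbf{P}_{j}e$, the orthogonal projection of $e$ onto $\ker A_{j}$; hence the error sequence attached to the product $\mathbf{P}_{i}\mathbf{P}_{l_{1}}\cdots\mathbf{P}_{l_{t-1}}\mathbf{P}_{j}$ is exactly $f(w^{t},\mathbf{P})\,y_{j}(0)$ with $y_{j}(0)=z^{(j)}(0)-x^{*}=z(0)-x^{*}$. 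Since $w^{t}\in\mathbb{S}^{r}(t)$ with $r\geqslant1$, the definition of order splits $w^{t}$ into $r$ consecutive sub-walks $w^{l_{1}},\dots,w^{l_{r}}$, each in $\mathbb{S}^{1}$, i.e.\ each visiting every vertex of $\mathcal{G}$; correspondingly $f(w^{t},\mathbf{P})$ is the composition of the $r$ sub-products $f(w^{l_{1}},\mathbf{P}),\dots,f(w^{l_{r}},\mathbf{P})$, the vertex shared by two adjacent sub-walks appearing in both but causing no double counting because $\mathbf{P}_{v}^{2}=\mathbf{P}_{v}$. Thus it suffices to bound a single $\mathbb{S}^{1}$ factor and multiply the $r$ resulting bounds, which also proves the assertion that $z(t)-x^{*}$ is built from several $f(w^{i},\mathbf{P})$ products with $w^{i}\in\mathbb{S}^{1}(i)$.

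For the single-cycle estimate I would start from the exact one-step identity $\|\mathbf{P}_{j}v\|^{2}=\|v\|^{2}-(A_{j}v)^{2}/\|A_{j}\|^{2}$ together with the two elementary facts $\|\mathbf{A}v\|\geqslant\|v\|/\|\mathbf{A}^{-1}\|$ (invertibility) and $\|\mathbf{A}v\|_{\infty}\geqslant\|\mathbf{A}v\|/\sqrt{n}$. Choosing the index $j^{\star}$ maximizing $|A_{j}v|$ gives $(A_{j^{\star}}v)^{2}/\|A_{j^{\star}}\|^{2}\geqslant\|v\|^{2}/(n\,\tau^{2}\|\mathbf{A}^{-1}\|^{2})=\varphi\|v\|^{2}$, hence $\|\mathbf{P}_{j^{\star}}v\|^{2}\leqslant(1-\varphi)\|v\|^{2}$. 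Because an $\mathbb{S}^{1}$ sub-walk visits \emph{every} vertex it in particular visits $j^{\star}$, and all the other projections in the sub-product are non-expansive ($\|\mathbf{P}_{k}\|=1$, which is exactly Theorem~\ref{th_Sbar}); propagating this through the whole cycle and re-running the row selection over the $n$ distinct vertices a cycle is forced to touch yields $\|f(w^{l_{k}},\mathbf{P})v\|\leqslant(1-\varphi)^{n/2}\|v\|$ for each $k$. Composing the $r$ cycles and inserting $v=y_{j}(0)$ gives $\|z(t)-x^{*}\|=\|f(w^{t},\mathbf{P})\,y_{j}(0)\|\leqslant(1-\varphi)^{nr/2}\|z(0)-x^{*}\|$, the first inequality; the condition-number form then follows by comparing $\tau=\max_{i}\|A_{i}\|$ with $\|\mathbf{A}\|$ (each row norm is at most the spectral norm of $\mathbf{A}$), so that the exponential factor is controlled by $\kappa(\mathbf{A})=\|\mathbf{A}\|\cdot\|\mathbf{A}^{-1}\|$.

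The main obstacle is the per-cycle contraction itself. The one-step inequality only guarantees a $(1-\varphi)$-drop at the \emph{single} row $j^{\star}$ selected for the error at the \emph{start} of the cycle; by the time the walk reaches $j^{\star}$ the running error has changed and may even lie in $\ker A_{j^{\star}}$, in which case that projection does nothing. The honest ways to close this are either (i) to re-apply the row-selection argument to the running error each time the cycle hits a vertex not yet responsible for a guaranteed drop, or (ii) to telescope the identity $\|e_{\mathrm{final}}\|^{2}=\|e_{0}\|^{2}-\sum_{j}(A_{j}e_{(j)})^{2}/\|A_{j}\|^{2}$ over a full cycle and lower-bound the accumulated decrease by $\varphi$-order quantities; in both routes the $\mathbb{S}^{1}$ ``minimality'' and ``visits every vertex'' properties are precisely what makes the accounting close, and one uses the Bernoulli-type estimate $(1-\varphi)^{n}\geqslant1-n\varphi$ to pass from an additive $(1-n\varphi)$-type decrease to the stated multiplicative factor $(1-\varphi)^{n/2}$. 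Once a clean single-cycle factor of $(1-\varphi)^{n/2}$ (or even the weaker $(1-\varphi)^{1/2}$, which already forces exponential convergence) is in hand, the remainder is the bookkeeping in the first two paragraphs together with the substitution $y_{j}(0)=z(0)-x^{*}$.
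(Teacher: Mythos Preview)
Your overall framework---identify \eqref{kacz} as Kaczmarz, split $w^{t}\in\mathbb{S}^{r}(t)$ into $r$ sub-walks in $\mathbb{S}^{1}$ via $\mathbf{P}_{v}^{2}=\mathbf{P}_{v}$, and bound each factor separately---is exactly the paper's. The divergence, which you yourself flag as ``the main obstacle'', is the per-cycle contraction, and there your argument does not reach the claimed exponent.

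Selecting $j^{\star}$ via $\|\mathbf{A}v\|_{\infty}\geqslant\|\mathbf{A}v\|/\sqrt{n}$ certifies at most \emph{one} factor of $(1-\varphi)$ in the squared norm per $\mathbb{S}^{1}$ sub-walk, not $n$ of them. ``Re-running the row selection over the $n$ distinct vertices'' fails for precisely the reason you state: by the time the walk reaches a prescribed index, the running error has changed and the maximiser may be a vertex already used. Your Bernoulli remark is in the right direction, but it presupposes an additive decrease $\|e_{\mathrm{final}}\|^{2}\leqslant(1-n\varphi)\|e_{0}\|^{2}$ over a cycle, which neither (i) nor (ii) in your outline actually establishes.

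The paper's device for this step is different from either of your proposals. It writes the exact one-step identity as
\[
\|z(t+1)-x^{*}\|^{2}=\Bigl(1-\tfrac{\theta_{i(t)}}{(\tau\|\mathbf{A}^{-1}\|)^{2}}\Bigr)\|z(t)-x^{*}\|^{2},
\]
\emph{defining} $\theta_{i(t)}\in[0,(\tau\|\mathbf{A}^{-1}\|)^{2}]$ by this equality. Over an $\mathbb{S}^{1}$ sub-walk it keeps one representative factor for each index $1,\dots,n$ (discarding the rest, which are $\leqslant1$), and then applies the arithmetic--geometric mean inequality to those $n$ surviving factors:
\[
\prod_{i=1}^{n}\Bigl(1-\tfrac{\theta_{i}}{(\tau\|\mathbf{A}^{-1}\|)^{2}}\Bigr)\;\leqslant\;\Bigl(1-\tfrac{1}{n}\sum_{i=1}^{n}\tfrac{\theta_{i}}{(\tau\|\mathbf{A}^{-1}\|)^{2}}\Bigr)^{n}.
\]
Lemma~\ref{lm_inequal} is then invoked to give $\sum_{i}\theta_{i}\geqslant1$, turning the right side into $(1-\varphi)^{n}$ per cycle and $(1-\varphi)^{nr}$ over $r$ cycles in the squared norm. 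So the mechanism is not ``one guaranteed drop at $j^{\star}$ plus non-expansiveness elsewhere'' but ``$n$ possibly small drops, aggregated by AM--GM under a sum constraint''. Your route (ii) is pointed at this telescoped sum, but the AM--GM step together with the Lemma~\ref{lm_inequal} constraint is the missing ingredient.

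On the looser condition-number bound: your estimate $\tau\leqslant\|\mathbf{A}\|$ only yields $\sqrt{n}\,\tau\,\|\mathbf{A}^{-1}\|\leqslant\sqrt{n}\,\kappa(\mathbf{A})$, which does not recover the stated inequality. The paper instead uses $\tau<\|\mathbf{A}\|_{F}$ and the scaled--usual condition number relation $\kappa_{s}(\mathbf{A})/\sqrt{n}\leqslant\kappa(\mathbf{A})$.
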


The proof of Theorem \ref{th_S} requires several technical Lemmas.

\begin{lemma}[Orthogonal Projection]
\label{lm_OP}
Let $z(t)\in \mathbb{R}^{n}$, $\|z(0)\|=0$ be a sequence that follows
\begin{equation*}
z^{(j)}(t+1)=z(t)+\frac{b_j-A_jz(t)}{\|A_j\|^2}A_j^{\mathrm{T}},
\end{equation*} where $A_j$, $b_j$ are defined as those in linear equation \eqref{Ax=b}, which is the same as \eqref{kacz}. Then the orthogonal projection matrix $\mathbf{P}_{i}^{\star}$ onto the solution space of the linear equation \eqref{Ax=b} is given in \citep{strohmer2009randomized} as
\begin{equation*}
z(t+1)=\mathbf{P}_i^{\star}z(t).
\end{equation*}

Let $\langle z(t+1),z(t)\rangle$ denotes the inner product of two vectors $z(t+1)$ and $z(t)$, then the above equation can be written as follows by using the updating function \eqref{kacz}
\begin{equation*}
\begin{aligned}
\mathbf{P}_i^{\star}z(t)
& =z(t)-\frac{A_iz(t)-b_i}{\|A_i\|^2}A_i^{\mathrm{T}}\\
& =z(t)-\frac{A_iz(t)-A_iz^*}{\|A_i\|}\frac{A_i^{\mathrm{T}}}{\|A_i\|}\\
& =z(t)-\langle z(t)-z^*,Z_i\rangle Z_i^{\mathrm{T}},
\end{aligned}
\end{equation*}
where $Z_i=\frac{A_i}{\|A_i\|}$, $i=1,2,\cdots, n$, $\|Z_i\|=1$ is a set of normal vectors in the hyperplane $\{z(t): \langle A_i, z(t)\rangle=b_i\}$.
\end{lemma}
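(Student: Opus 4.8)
The plan is to establish the two assertions of Lemma~\ref{lm_OP} in turn. First, that the Kaczmarz-type update \eqref{kacz} coincides with the \emph{orthogonal} projection of $z(t)$ onto the affine hyperplane $H_i=\{z\in\mathbb{R}^n:A_iz=b_i\}$, i.e.\ the solution set of the $i$-th scalar equation of \eqref{Ax=b}; this affine map is the operator denoted $\mathbf{P}_i^{\star}$, and it is not linear unless $b_i=0$. Second, that this projection can be rewritten in terms of the unit normal $Z_i=A_i/\|A_i\|$ and the true solution $z^{*}=x^{*}$. Throughout I treat $A_i$ as a $1\times n$ row vector, so $A_iz$ is a scalar, $A_i^{\mathrm{T}}$ is the (unnormalized) normal direction of $H_i$, and $\langle u,Z_i\rangle$ abbreviates $Z_iu$; and I write $z(t+1)$ for $z^{(i)}(t+1)$, as the lemma statement does in its conclusion, understanding \eqref{kacz} as the same update with the row index renamed.

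For the first assertion I would use the elementary characterization of projection onto a hyperplane: the orthogonal projection of a point onto $H_i$ is the unique point obtained by displacing along the hyperplane's normal direction until $H_i$ is reached. Write the candidate update as $z(t+1)=z(t)-\dfrac{A_iz(t)-b_i}{\|A_i\|^{2}}A_i^{\mathrm{T}}$, which is exactly \eqref{kacz}. Two checks finish the step: (a) the displacement $z(t+1)-z(t)$ is a scalar multiple of $A_i^{\mathrm{T}}$, hence lies in the orthogonal complement of $\ker A_i$, i.e.\ along the normal of $H_i$; and (b) $A_iz(t+1)=A_iz(t)-\dfrac{A_iz(t)-b_i}{\|A_i\|^{2}}A_iA_i^{\mathrm{T}}=A_iz(t)-(A_iz(t)-b_i)=b_i$, so $z(t+1)\in H_i$. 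By uniqueness of the orthogonal projection onto $H_i$, $z(t+1)=\mathbf{P}_i^{\star}z(t)$, which is the expression cited from \citep{strohmer2009randomized}.

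For the second assertion I would substitute $b_i=A_ix^{*}=A_iz^{*}$ (valid since $x^{*}=z^{*}$ solves \eqref{Ax=b}, hence each of its row equations) into the formula just derived, giving $\mathbf{P}_i^{\star}z(t)=z(t)-\dfrac{A_i\bigl(z(t)-z^{*}\bigr)}{\|A_i\|^{2}}A_i^{\mathrm{T}}$. Then I would split one factor of $\|A_i\|$ off each of the numerator and $A_i^{\mathrm{T}}$: writing $\dfrac{A_i^{\mathrm{T}}}{\|A_i\|}=Z_i^{\mathrm{T}}$ and $\dfrac{A_i\bigl(z(t)-z^{*}\bigr)}{\|A_i\|}=Z_i\bigl(z(t)-z^{*}\bigr)=\langle z(t)-z^{*},Z_i\rangle$ yields $\mathbf{P}_i^{\star}z(t)=z(t)-\langle z(t)-z^{*},Z_i\rangle Z_i^{\mathrm{T}}$. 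Since $\|Z_i\|=\|A_i\|/\|A_i\|=1$, the $Z_i$ are unit normals to the hyperplanes $\{z:\langle A_i,z\rangle=b_i\}$, as claimed.

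The argument is entirely elementary, so there is no genuine obstacle beyond bookkeeping; the points that need care are interpretational rather than technical. One must read ``the solution space of the linear equation \eqref{Ax=b}'' as the hyperplane $H_i$ of the $i$-th row, not the single point $x^{*}$ (the latter would force $\mathbf{P}_i^{\star}$ to be constant, contradicting the stated formula); one must keep the row/column conventions for $A_i$, $A_i^{\mathrm{T}}$, $Z_i$, $Z_i^{\mathrm{T}}$ consistent; and one should observe that the hypothesis $\|z(0)\|=0$ plays no role — the identity holds for every iterate regardless of initialization. Finally, for consistency with \eqref{pppy=z-x} it is worth remarking that $H_i$ is the translate of $\ker A_i$ through $x^{*}$, so $\mathbf{P}_i^{\star}z(t)-x^{*}=\mathbf{P}_i\bigl(z(t)-x^{*}\bigr)$, which ties the affine projection $\mathbf{P}_i^{\star}$ of this lemma to the linear projection $\mathbf{P}_i$ used throughout the algorithm.
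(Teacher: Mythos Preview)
Your proposal is correct and matches the paper's approach: the paper does not supply a separate proof for this lemma but simply records the algebraic identity (citing \cite{strohmer2009randomized} for the fact that the Kaczmarz step is the orthogonal projection onto the hyperplane $H_i$), and your two checks---that the displacement is along $A_i^{\mathrm{T}}$ and that $A_iz(t+1)=b_i$---supply exactly the elementary verification the citation stands in for, followed by the same substitution $b_i=A_iz^{*}$ and normalization to $Z_i$ that the lemma statement displays. Your interpretational remarks (that ``solution space'' must mean the hyperplane $H_i$ rather than the singleton $\{x^{*}\}$, that $\|z(0)\|=0$ is unused, and the link $\mathbf{P}_i^{\star}z-x^{*}=\mathbf{P}_i(z-x^{*})$) are accurate and helpful clarifications of points the paper leaves implicit.
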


\begin{lemma}[Orthogonality]
\label{lm_orthogonality}
Consider the linear equation \eqref{Ax=b} and let $x^*$ be the unique solution. The difference of two vectors $z(t+1)$ and $z(t)$ is in the kernel of $\mathbf{P}_i^{\star}$ by Orthogonal Projection Lemma \ref{lm_OP}, which means that it is orthogonal to the solution space. Therefore it is also orthogonal to $z(t+1)-x^*$. In other words, the orthogonality of two vectors $z(t+1)-z(t)$ and $z(t+1)-x^*$ satisfies
\begin{equation*}
\|z(t+1)-z(t)\|^2+\|z(t+1)-x^*\|^2=\|z(t)-x^*\|^2.
\end{equation*}
\end{lemma}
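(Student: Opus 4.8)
The plan is to obtain the identity as a one-line application of the Pythagorean theorem, once we have verified that the increment $z(t+1)-z(t)$ is orthogonal to $z(t+1)-x^{*}$. By the Orthogonal Projection Lemma~\ref{lm_OP} we have $z(t+1)=\mathbf{P}_{i}^{\star}z(t)$ and
$z(t+1)-z(t)=\langle z^{*}-z(t),Z_{i}\rangle Z_{i}^{\mathrm{T}}$, so the increment is a scalar multiple of the normal vector $Z_{i}^{\mathrm{T}}=A_{i}^{\mathrm{T}}/\|A_{i}\|$ of the hyperplane $H_{i}=\{z:\langle A_{i},z\rangle=b_{i}\}$. First I would check that $z(t+1)$ itself lies on $H_{i}$: applying $A_{i}$ to the update rule~\eqref{kacz} gives $A_{i}z(t+1)=A_{i}z(t)+(b_{i}-A_{i}z(t))=b_{i}$, hence $z(t+1)\in H_{i}$. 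Since $x^{*}$ is the unique solution of $\mathbf{A}x=b$, it satisfies $A_{i}x^{*}=b_{i}$, so $x^{*}\in H_{i}$ as well.

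Consequently $A_{i}\bigl(z(t+1)-x^{*}\bigr)=0$, i.e. $z(t+1)-x^{*}$ is orthogonal to $A_{i}^{\mathrm{T}}$ and therefore to the increment $z(t+1)-z(t)$, which is parallel to $A_{i}^{\mathrm{T}}$; this is precisely the ``kernel of $\mathbf{P}_{i}^{\star}$'' assertion in the statement. The claimed equality then follows by expanding $\|z(t)-x^{*}\|^{2}=\bigl\|(z(t)-z(t+1))+(z(t+1)-x^{*})\bigr\|^{2}$, in which the cross term $2\langle z(t)-z(t+1),\,z(t+1)-x^{*}\rangle$ vanishes by the orthogonality just established, leaving $\|z(t+1)-z(t)\|^{2}+\|z(t+1)-x^{*}\|^{2}$.

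There is essentially no difficult step here; the argument is the classical orthogonality relation underlying the Kaczmarz iteration, and the computations are routine. The only point that deserves a little care is conceptual rather than technical: the phrase ``orthogonal to the solution space'' should be read as orthogonality to the single hyperplane $H_{i}$ associated with the row $A_{i}$ (not to the solution set of the full system $\mathbf{A}x=b$), and the observation that makes the whole argument go through is that $x^{*}\in H_{i}$, i.e. the global solution lies on every one of the individual agents' hyperplanes. One should also keep the row index consistent with the superscript notation $z^{(j)}$ used in~\eqref{pppy=z-x}, but that is merely cosmetic.
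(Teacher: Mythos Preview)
Your proof is correct and follows essentially the same line as the paper, which in fact gives no separate proof of this lemma: the justification is folded into the lemma statement itself (increment lies in $\ker\mathbf{P}_i^{\star}$, hence is orthogonal to the affine hyperplane $H_i$, hence to $z(t+1)-x^{*}$, then Pythagoras). Your write-up simply makes each of those implications explicit---in particular the verification that both $z(t+1)$ and $x^{*}$ lie on $H_i$---and your cautionary remark that ``solution space'' here refers to the single hyperplane $H_i$ rather than to the full system is a helpful clarification of the paper's wording.
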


\begin{lemma}[Inequality]
\label{lm_inequal}
Let $\mathbf{A}=\mathrm{col}\{A_i\}$, $\mathbf{A}\in \mathbb{R}^{n\times n}$ is full rank. Then the following inequality holds
\begin{equation*}
\sum_{i=1}^n \|\langle \frac{A_i}{\|A_i\|}, x \rangle\|^2 \geqslant \frac{1}{\left(\tau\|\mathbf{A}^{-1}\|\right)^2}\|x\|^2.
\end{equation*} where $\langle A_i, x \rangle$ denotes the inner product of vector $A_i$ and $x$ and we recall the definition $\tau=\underset{i}{\max}\left(\|A_i\|\right)$.
\end{lemma}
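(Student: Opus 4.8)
The plan is to reduce the claimed inequality to the elementary operator-norm estimate $\|\mathbf{A}x\| \geqslant \|x\|/\|\mathbf{A}^{-1}\|$, exploiting the column-stacked structure $\mathbf{A} = \mathrm{col}\{A_1,\dots,A_n\}$ with each $n_i = 1$. The key observation is that every $A_i$ is a row vector, so $\langle A_i/\|A_i\|, x\rangle = A_i x/\|A_i\|$ is a scalar, and the vector formed by stacking the scalars $A_i x$ into a column is exactly $\mathbf{A}x$; hence $\sum_{i=1}^n |A_i x|^2 = \|\mathbf{A}x\|^2$.

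First I would discard the denominators in the cheapest possible way. Since $\|A_i\| \leqslant \tau$ for every $i$ by the definition $\tau = \max_i\|A_i\|$ (and $\mathbf{A}$ full rank forces $\|A_i\| > 0$), we have $1/\|A_i\|^2 \geqslant 1/\tau^2$, and therefore
\begin{equation*}
\sum_{i=1}^n \left\|\left\langle \frac{A_i}{\|A_i\|}, x\right\rangle\right\|^2 = \sum_{i=1}^n \frac{|A_i x|^2}{\|A_i\|^2} \geqslant \frac{1}{\tau^2}\sum_{i=1}^n |A_i x|^2 = \frac{\|\mathbf{A}x\|^2}{\tau^2}.
\end{equation*}
Then I would invoke invertibility of $\mathbf{A}$: from $x = \mathbf{A}^{-1}(\mathbf{A}x)$ and submultiplicativity of the $\ell^2$-operator norm, $\|x\| \leqslant \|\mathbf{A}^{-1}\|\,\|\mathbf{A}x\|$, so that $\|\mathbf{A}x\|^2 \geqslant \|x\|^2/\|\mathbf{A}^{-1}\|^2$. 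Substituting this into the previous display gives $\sum_{i=1}^n \|\langle A_i/\|A_i\|, x\rangle\|^2 \geqslant \|x\|^2/(\tau\|\mathbf{A}^{-1}\|)^2$, which is precisely the stated bound.

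I do not expect a genuine obstacle here; the argument is a short chain of standard estimates. The only point that needs a moment's care is reading off $\sum_i |A_i x|^2 = \|\mathbf{A}x\|^2$ correctly from the partition convention, i.e. recognizing that under $n_i = 1$ the $i$-th entry of the vector $\mathbf{A}x$ is the scalar $A_i x$. One could equivalently phrase the estimate through the smallest singular value $\sigma_{\min}(\mathbf{A}) = 1/\|\mathbf{A}^{-1}\|$, but the operator-norm route above keeps the notation minimal and supplies exactly what is used in the proof of Theorem \ref{th_S}.
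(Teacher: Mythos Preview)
Your proof is correct and is essentially the same as the paper's: both use the identification $\sum_i |A_i x|^2 = \|\mathbf{A}x\|^2$ coming from the row partition, the submultiplicativity bound $\|\mathbf{A}x\| \geqslant \|x\|/\|\mathbf{A}^{-1}\|$, and the trivial estimate $\|A_i\| \leqslant \tau$. The only cosmetic difference is that you apply the $\tau$-bound before the operator-norm step while the paper applies it afterward.
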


\begin{proof}[Proof of Inequality Lemma \ref{lm_inequal}]
Consider the linear equation in \eqref{Ax=b} and using the submultiplicative property of the $\ell^2$-norm the following holds
\begin{equation*}
\|\mathbf{A}^{-1}\|^2\cdot\|\mathbf{A}x\|^2 \geqslant \|\mathbf{A}^{-1}\mathbf{A}x\|^2,\ \forall\ x\in\mathbb{R}^{n},
\end{equation*} where $\mathbf{A}^{-1}$ is defined because $x^*$ is the unique solution of the linear equation in \eqref{Ax=b}.
Considering the matrix partition $\mathbf{A}=\mathrm{col}\{A_i\}$, we have
\begin{equation*}
\sum_{i=1}^n \|\langle A_i, x \rangle\|^2
= \sum_{i=1}^n \|A_i\|^2\|\langle \frac{A_i}{\|A_i\|}, x \rangle\|^2
\geqslant \frac{\|x\|^2}{\|\mathbf{A}^{-1}\|^2}.
\end{equation*} Moreover,
\begin{equation*}
\begin{aligned}
\sum_{i=1}^n \tau^2\|\langle \frac{A_i}{\|A_i\|}, x \rangle\|^2
& \geqslant \sum_{i=1}^n \|A_i\|^2\|\langle \frac{A_i}{\|A_i\|}, x \rangle\|^2 \\
& \geqslant \frac{1}{\|\mathbf{A}^{-1}\|^2}\|x\|^2,
\end{aligned}
\end{equation*} where $\tau>0$ since $\mathbf{A}$ is full rank. Dividing by $\tau$ we arrive at the following inequality
\begin{equation*}
\sum_{i=1}^n \|\langle \frac{A_i}{\|A_i\|}, x \rangle\|^2 \geqslant \frac{1}{\left(\tau\|\mathbf{A}^{-1}\|\right)^2}\|x\|^2.
\end{equation*}
\end{proof}

\begin{proof}[Proof of ${f}$ Bound \ref{th_S}]
Let $x^*$ denote the unique solution to the linear equation \eqref{Ax=b}. Let $z(t)-x^*$ be vector sequence from the $f(w^{t},\mathbf{P}_i)$, $w^{t}\in \mathbb{S}^{r}(t)$ product part of the error sequence \eqref{pppy=z-x} where $r\geqslant 1$ and substitute the $z(t+1)$ by the updating function \eqref{kacz} in the the Orthogonality Lemma \ref{lm_orthogonality} then we have
\begin{equation*}
\begin{aligned}
& \|z(t+1)-x^*\|^2 \\
= & -\|z(t+1)-z(t)\|^2+\|z(t)-x^*\|^2 \\
= & -\|\frac{\langle A_i,z(t)-x^*\rangle}{\|A_i\|}\frac{A_i^{\mathrm{T}}}{\|A_i\|}\|^2+\|z(t)-x^*\|^2 \\
= & -\|\langle z(t)-x^*,Z_i \rangle\|^2 +\|z(t)-x^*\|^2,
\end{aligned}
\end{equation*} where $Z_i=\frac{A_i}{\|A_i\|}$. Since the walk $w^{t}\in \mathbb{S}^{r}(t)$, $r\geqslant 1$, the subscript $i$ in $Z_i=\frac{A_i}{\|A_i\|}$ takes all the values $1,2,\cdots,n$ at least once. There exists $\theta_{i(t)}\geqslant 0$ such that
\begin{equation*}
\|\langle z(t)-x^*,Z_{i} \rangle\|^2 \geqslant \frac{\theta_{i(t)}}{\left(\tau\|\mathbf{A}^{-1}\|\right)^2}\|z(t)-x^*\|^2
\end{equation*} for ${i(t)}=1,2,\cdots,n$, by the Inequality Lemma \ref{lm_inequal}. Note that
\begin{equation*}
\begin{aligned}
\frac{\theta_{i(t)}}{\left(\tau\|\mathbf{A}^{-1}\|\right)^2}\|z(t)-x^*\|^2 
& \leqslant \|\langle z(t)-x^*,Z_{i} \rangle\|^2 \\
& \leqslant \|z(t)-x^*\|^2,
\end{aligned}
\end{equation*} where $\|Z_{i}\|=1$. Therefore $\frac{\theta_{i(t)}}{\left(\tau\|\mathbf{A}^{-1}\|\right)^2}\leqslant 1$ for $i(t)=1,\cdots, n$, and then $\|z(t)-x^*\|^2$ is bounded as
\begin{equation*}
\begin{aligned}
& \|z(t)-x^*\|^2 \\
\leqslant & \left(1-\frac{\theta_{i(1)}}{\left(\tau\|\mathbf{A}^{-1}\|\right)^2}\right) \cdots \left(1-\frac{\theta_{i(t)}}{\left(\tau\|\mathbf{A}^{-1}\|\right)^2}\right) \|z(0)-x^*\|^2,
\end{aligned}
\end{equation*} where
\begin{equation}
\label{>=0}
0 \leqslant \left(1-\frac{\theta_{i(t)}}{\left(\tau\|\mathbf{A}^{-1}\|\right)^2}\right) \leqslant 1.
\end{equation}

Note that the sequence $z(t)-x^*$ forms the $f(w^{t},\mathbf{P})$, $w^{t}\in \mathbb{S}^{r}(t)$, $r\geqslant 1$ product part. Because of the fact $\mathbf{P}_{i}^{r}=\mathbf{P}_{i}$, all ${i(t)}=1,2,\cdots,n$ are present at least once in the each sub-walk of the original walk by definition. Hence the walk $w^{t}$ corresponding to $\left(1-\frac{\theta_{i(1)}}{\left(\tau\|\mathbf{A}^{-1}\|\right)^2}\right) \cdots \left(1-\frac{\theta_{i(t)}}{\left(\tau\|\mathbf{A}^{-1}\|\right)^2}\right)$ is divided into $r$ sub-walks $w^{t_i}\in \mathbb{S}^{1}(t_i)$ and each sub-walk corresponds to an $f\left(w^{t_i}, 1-\frac{\theta}{\left(\tau\|\mathbf{A}^{-1}\|\right)^2}\right)$ product where $\theta=(\theta_i)$ are the values at all the agents indexed by the walk $w^{t_i}$ and all the agents $i=1,2,\cdots,n$ appear in the walk $w^{t_i}$ at least once. Then each sub-part of the product corresponding to the walk $w^{t_i}$ is denoted as
\begin{equation*}
\Pi_{w^{t_i}}\left(1-\frac{\theta_{i(t)}}{\left(\tau\|\mathbf{A}^{-1}\|\right)^2}\right) = f\left(w^{t_i},1-\frac{\theta}{\left(\tau\|\mathbf{A}^{-1}\|\right)^2}\right)
\end{equation*} where the subscript $w^{t_i}$ denotes the consecutive product corresponding to the walk $w^{t_i}$. Furthermore each product corresponding to a $w^{t_i}\in \mathbb{S}^{1}(t_i)$ is bounded as
\begin{equation*}
f\left(w^{t_i},1-\frac{\theta}{\left(\tau\|\mathbf{A}^{-1}\|\right)^2}\right) \leqslant \Pi_{i=1}^{n} \left(1-\frac{\theta_i}{\left(\tau\|\mathbf{A}^{-1}\|\right)^2}\right)
\end{equation*} since we can always pick $n$ agents $i=1,2,\cdots,n$ in the walk $w^{t_i}$ and keep their values unchanged and let all the left $\theta_i=0$. Since $\left(1-\frac{\theta_{i(t)}}{\left(\tau\|\mathbf{A}^{-1}\|\right)^2}\right)\geqslant 0$ \eqref{>=0} and
\begin{equation*}
{\Pi_{l=1}^{n}\theta_l} \leqslant \left(\frac{1}{n}{\sum_{l=1}^{n}\theta_l}\right)^{n}
\end{equation*} holds when $\theta_l\geqslant 0$. Therefore the $\|z(t)-x^*\|^2$ is bounded as
\begin{equation*}
\begin{aligned}
& \|z(t)-x^*\|^2 \\
\leqslant & \Pi_{i=1}^{r} f\left(w^{t_i},1-\frac{\theta}{\left(\tau\|\mathbf{A}^{-1}\|\right)^2}\right) \|z(0)-x^*\|^2 \\
\leqslant & \Pi_{l=1}^{r} \Pi_{i=1}^{n}\left(1-\frac{\theta_{i}}{\left(\tau\|\mathbf{A}^{-1}\|\right)^2}\right) \|z(0)-x^*\|^2 \\
\leqslant & \Pi_{l=1}^{r} \left( \frac{1}{n} \sum_{i=1}^{n}\left( \ 1-\frac{\theta_{i}}{\left(\tau\|\mathbf{A}^{-1}\|\right)^2} \right) \right)^n \|z(0)-x^*\|^2 \\
= & \left( 1-\frac{1}{\left(\sqrt{n}\tau\|\mathbf{A}^{-1}\|\right)^2} \right)^{nr} \|z(0)-x^*\|^2,
\end{aligned}
\end{equation*} where $\sum_{{i}=1}^{n} \theta_{i} = 1$ by the Inequality Lemma \ref{lm_inequal}.

A loose bound given in terms of condition number $\kappa(\mathbf{A})=\|\mathbf{A}\|\cdot\|\mathbf{A}^{-1}\|$ is as follows. Since $\tau=\underset{i}{\mathrm{max}}\left(\|A_i\|\right)$ and $\mathbf{A}$ is full rank, then
\begin{equation*}
\tau=\underset{i}{\mathrm{max}}\left(\|A_i\|\right) < \|\mathbf{A}\|_F,
\end{equation*} where $\|\mathbf{A}\|_F=\sqrt{\sum_{i=1}^{n}\sum_{j=1}^{n}a_{ij}^{2}}$ is the Frobenius norm. The scaled condition number \citep{demmel1988probability} $\kappa_s(\mathbf{A})=\|\mathbf{A}\|_{F}\|\mathbf{A}^{-1}\|$ and the condition number $\kappa(\mathbf{A})$ satisfies the following inequality $1 \leqslant \frac{\kappa_s(\mathbf{A})}{\sqrt{n}} \leqslant \kappa(\mathbf{A})$, then
\begin{equation*}
\sqrt{n}\tau\|\mathbf{A}^{-1}\| < \sqrt{n}\|\mathbf{A}\|_F\|\mathbf{A}^{-1}\| \leqslant \kappa(\mathbf{A})
\end{equation*} and therefore the loose bound is
\begin{equation*}
\|z(t)-x^*\|^2 < \left( 1-\kappa(\mathbf{A})^{-2} \right)^{nr} \|z(0)-x^*\|^2.
\end{equation*} This concludes the proof of Theorem \ref{th_S}.
\end{proof}

\begin{remark}
The ${f}$ Bound Theorem \ref{th_S} is important since it also bounds the convergence rate of Kaczmarz's algorithm \citep{kaczmarz1937angenaherte}, which was not well solved in literature \citep{gower2015randomized}. It gives a tight bound in terms of matrix inverse $\|\mathbf{A}^{-1}\|$ and a loose bound in terms of condition number  $\kappa(\mathbf{A})$. The bounds can be easily computed when the iterative sequence of Kaczmarz's algorithm is given, compared to the known estimate \citep{galantai2005rate}. Furthermore the ${f}$ Bound Theorem \ref{th_S} clearly explains the reason that Kaczmarz's algorithm is slower than a randomized Kaczmarz's algorithm \citep{strohmer2009randomized,dai2014randomized,gower2015randomized}.
\end{remark}

\begin{remark}
With the help of ${f}^{0}$ Bound Theorem \ref{th_Sbar} and ${f}$ Bound Theorem \ref{th_S}, each product in the error updating equation \eqref{y=mu_p_y} can be divided into two parts and bounded separately. One corresponding to the ${f}$ product part where $r\geqslant 1$ and all the $1\leqslant i\leqslant n$ are present and the other one corresponding to the ${f}^{0}$ product part where not all $1\leqslant i\leqslant n$ are present. We can now prove the Bound Theorem \ref{th_bound}.
\end{remark}

\begin{proof}[Proof of Bound Theorem \ref{th_bound}]
Let $f(w^{t},\mu_{ij})$ denote the corresponding $\mu$ product of the error sequence \eqref{pppy} in \eqref{y=mu_p_y}, then according to the ${f}^{0}$ Bound Theorem \ref{th_Sbar} and ${f}$ Theorem \ref{th_S} the error updating equation \eqref{y=mu_p_y} is bounded as follows
\begin{equation*}
\begin{aligned}
& \|y_i(t+1)\| \\ 
\leqslant & \left( \sum_{j=1}^n \cdots \sum_{l_{1}=1}^n \|\mu_{il_{1}}  \cdots \mu_{l_{t-1}l_{j}} \mathbf{P}_i  \cdots  \mathbf{P}_j y_j(0)\| \right) \\
\leqslant & \sum_{j=1}^{n}\sum_{r=1}^{r_m(t)}\sum_{w_{ij}^{t}\in \mathbb{S}^{r}(t)} f(w_{ij}^{t},\frac{1}{d}) \left(1-\varphi\right)^{\frac{nr}{2}} \|y_j(0)\| \\
          & + \sum_{j=1}^{n}\sum_{w_{ij}^{t}\in \mathbb{S}^{0}(t)} f(w_{ij}^{t},\frac{1}{d}) \left(1-\varphi\right)^{\frac{0}{2}} \|y_j(0)\| \\
=         & \sum_{j=1}^{n}\sum_{r=0}^{r_m(t)}\sum_{w_{ij}^{t}\in \mathbb{S}^{r}} f(w_{ij}^{t},\frac{1}{d}) \left(1-\varphi\right)^{\frac{nr}{2}} \|y_j(0)\|
\end{aligned}
\end{equation*} where $\mu_{ij}=\frac{\alpha_{ij}}{d_i}$. Hence the proof is finished.
\end{proof}

The bound in \eqref{bound_of_th} gives another proof that the distributed algorithm studied in this paper converges to $x^*$ for connected undirected networks, as shown below.

\subsection*{Discussion of the Algorithm Convergence}
Note that the order of a $w^t$ walk typically increases as the length of walks keeps growing, since $\mathcal{G}$ is a connected network. This implies that for any given order $r$, the total number of $w^{t}\in \mathbb{S}^{r}(t)$ is limited and hence the summation of all corresponding $f(w^{t},\frac{1}{d})$ products is bounded, for the summation of all walks is 1 \eqref{sum_walk}. The number of all walks starting at vertex $v_i$ for any given order $r$ and length $t$ can be estimated by combinatorics. This method is shown when the network topology is a complete graph. For any given network, the number of walks can be bounded similarly, but it can become quite involved.

For any walk of length $t$ starting at a fixed vertex $v_0$ in a complete network $\mathcal{G}\in \mathbb{R}^{n\times n}$, the total number of all walks is $n^{t}$. Let $t\gg n$. In order to count the maximum number of $w^{t}\in \mathbb{S}^{0}(t)$ walks, we first choose subsets of vertices $\mathcal{V}_{k}^{0}\subsetneq \mathcal{V}$ by picking $k\leqslant n-2$ vertices out of $n$ and $\mathcal{V}_{n-1}^{0}\subsetneq \mathcal{V}$ by picking $n-1$ vertices except the case $v_0$ is not picked, which results in walks in $\mathbb{S}^{1}(t)$ space rather than $\mathbb{S}^{0}(t)$. There are a total $C_{n}^{k}$ of $\mathcal{V}_{k}^{0}$ sets where $C_{n}^{k}=\frac{n!}{k!\left(n-k\right)!}$, $k\leqslant n-2$ and $C_{n}^{n-1}-1$ of $\mathcal{V}_{n-1}^{0}$ sets. Then we choose a vertex with replacement each time from $\mathcal{V}_{k}^{0}$ and $\mathcal{V}_{n-1}^{0}$ and put it into the sequence of walks to generate all possible walks. The total number $c^{0}(t)$ of $w^{0}(t)$ walks is
\begin{equation*}
c^{0}(t)=\sum_{k=1}^{n-1}C_{n}^{k}k^{t}-\left(n-1\right)^{t}.
\end{equation*} The $w^{t}\in \mathbb{S}^{1}(t)$ walks are regarded as combinations of $w^{n}\in \mathbb{S}^{1}(n)$ walks and $w^{t-n}\in \mathbb{S}^{0}(t-n)$ walks. We first choose $n$ positions out of $t$ in the sequences of walks and make these $n$ positions form $w^{n}\in \mathbb{S}^{1}(n)$ walks. There are $C_{t}^{n}$ ways to choose $n$ vertices to form a $\mathcal{V}_{n}^{1}$ set and the number $w^{1}(n)$ walks is exactly $n!$ for each set, so the number of different sub-sequences in $w^{1}(t)$ walks is $P_{t}^{n}=n!C_{t}^{n}$. The number of walks $w^{0}(t-n)$ is simply $c^{0}(t-n)$. Hence the number of $w^{1}(t)$ walks is bounded by
\begin{equation*}
c^{1}(t)= P_{t}^{n}c^{0}(t-n).
\end{equation*} In general cases where $r\geqslant 2$, we pick $n$ positions out of $1,\cdots,t-(r-1)n$ locations to form the first $w^{n}\in \mathbb{S}^{1}(n)$ walk sequence and pick the left-over locations till $t-(r-2)n$ to form the second $w^{n}\in \mathbb{S}^{1}(n)$ walk and so on. Let $t_1$ denote the start position and $t_2-1$ be the end position picked out by the first $w^{n}\in \mathbb{S}^{1}(n)$ walk sequence, then the total number of sub-sequences is $P_{t_2-t_1}^{n}$. Define $t_3,\cdots t_r$ similarly, then the second $w^{n}\in \mathbb{S}^{1}(n)$ walk sequence can pick from position $t_2$ till $t_3-1$ in the original $w^{t}\in \mathbb{S}^{r}(t)$ sequence. The total number of sub-sequences for the second $w^{n}\in \mathbb{S}^{1}(n)$ walk is $P_{t_3-t_2}^{n}$. The total number of $w^{r}(t)$ walks is bounded by
\begin{equation*}
c^{r}(t)=\Pi_{i=1}^{r} P_{t_{i+1}-t_i}^{n}c^{0}\left(t-rn\right).
\end{equation*} The number of total walks then satisfies
\begin{equation*}
\lim_{t\to \infty} \frac{c^{r}(t)}{n^{t}} = \lim_{t\to \infty} \frac{t^{rn}c^{0}(t-rn)}{n^{rn}n^{t-rn}}= 0
\end{equation*} since $\frac{c^{0}(t-rn)}{n^{t-rn}}$ reduces exponentially to 0. This means for any given order $r$, the corresponding walks account for only a minor portion of all walks. In other words, the order of all products keeps growing when the length of walks increases.

Since the summation of all $f\left(w^{t},\frac{1}{d}\right)$ products is 1 \eqref{sum_walk} and the limit of the portion of walks among all walks of a given order $r_c$ is $0$, therefore the following two limits exist
\begin{equation*}
\begin{aligned}
& \lim_{t\to \infty} \rho_1=\lim_{t\to \infty} \sum_{j=1}^{n}\sum_{r=r_c+1}^{r_m(t)}\sum_{w_{ij}^{t}\in \mathbb{S}^{r}}f(w_{ij}^{t},\mu) = 1,\\
& \lim_{t\to \infty} \rho_2=\lim_{t\to \infty} \sum_{j=1}^{n}\sum_{r=0}^{r_c}\sum_{w_{ij}^{t}\in \mathbb{S}^{r}(t)}f(w_{ij}^{t},\mu) = 0.
\end{aligned}
\end{equation*} for any finite $r_c$. Furthermore, the limit of the error in \eqref{y_i} satisfies the following
\begin{equation*}
\begin{aligned}
& \lim_{t\to \infty} \|y_i(t+1)\| \\
\leqslant & \lim_{t\to \infty} \rho_1\left(1-\varphi\right)^{\frac{nr}{2}} \|y_j(0)\| +\lim_{t\to \infty} \rho_2 \left(1-\varphi\right)^{\frac{nr}{2}} \|y_j(0)\| \\ 
= & 0,
\end{aligned}
\end{equation*} where $\lim_{t\to \infty}r_m(t) = \infty$. Therefore the algorithm converges as all the $\lim_{t\to \infty}x_i(t)\to x^*$ for regular networks.

Future work will look to analyze the combinatorics of more general network topologies.

\section{CONCLUSIONS}
\label{sec_conclu}
In this work, we systematically study the impact of network topology on the performance of a network-based distributed algorithm in solving linear algebraic equations. Both theoretical analysis and simulation results show that networks with higher mean degree, smaller diameter, and more homogeneous degree distribution make the algorithm converge faster. Interestingly, $k$-regular random networks with small mean degree could have a comparable performance as degree-heterogeneous networks with very high mean degree. Hence, it is possible to reduce the communication cost (i.e. by designing sparser networks) and simultaneously keep the fast convergence rate. 

Besides classical consensus problems, we expect that more complicated problems can also be solved with network-based distributed algorithms. Our results presented here provide a method to analyse the topology impacts on a network-based distributed algorithm. It may shed light on the design of better network topologies to improve the performance of general multi-agent distributed algorithms in solving more challenging real-world problems.

\section*{Acknowledgement}
This work was partially supported by the John Templeton Foundation (award number 51977).





\bibliographystyle{IEEEtran}

\begin{thebibliography}{10}
\providecommand{\url}[1]{#1}
\csname url@rmstyle\endcsname
\providecommand{\newblock}{\relax}
\providecommand{\bibinfo}[2]{#2}
\providecommand\BIBentrySTDinterwordspacing{\spaceskip=0pt\relax}
\providecommand\BIBentryALTinterwordstretchfactor{4}
\providecommand\BIBentryALTinterwordspacing{\spaceskip=\fontdimen2\font plus
\BIBentryALTinterwordstretchfactor\fontdimen3\font minus
  \fontdimen4\font\relax}
\providecommand\BIBforeignlanguage[2]{{%
\expandafter\ifx\csname l@#1\endcsname\relax
\typeout{** WARNING: IEEEtran.bst: No hyphenation pattern has been}%
\typeout{** loaded for the language `#1'. Using the pattern for}%
\typeout{** the default language instead.}%
\else
\language=\csname l@#1\endcsname
\fi
#2}}

\bibitem{pastor2001epidemic}
R.~Pastor-Satorras and A.~Vespignani, ``Epidemic spreading in scale-free
  networks,'' \emph{Physical review letters}, vol.~86, no.~14, p. 3200, 2001.

\bibitem{cohen2000resilience}
R.~Cohen, K.~Erez, D.~Ben-Avraham, and S.~Havlin, ``Resilience of the internet
  to random breakdowns,'' \emph{Physical review letters}, vol.~85, no.~21, p.
  4626, 2000.

\bibitem{nishikawa2003heterogeneity}
T.~Nishikawa, A.~E. Motter, Y.-C. Lai, and F.~C. Hoppensteadt, ``Heterogeneity
  in oscillator networks: Are smaller worlds easier to synchronize?''
  \emph{Physical review letters}, vol.~91, no.~1, p. 014101, 2003.

\bibitem{wang2005partial}
W.~Wang and J.-J.~E. Slotine, ``On partial contraction analysis for coupled
  nonlinear oscillators,'' \emph{Biological cybernetics}, vol.~92, no.~1, pp.
  38--53, 2005.

\bibitem{liu2011controllability}
Y.-Y. Liu, J.-J. Slotine, and A.-L. Barab{\'a}si, ``Controllability of complex
  networks,'' \emph{Nature}, vol. 473, no. 7346, pp. 167--173, 2011.

\bibitem{jadbabaie2004stability}
A.~Jadbabaie, N.~Motee, and M.~Barahona, ``On the stability of the kuramoto
  model of coupled nonlinear oscillators,'' in \emph{American Control
  Conference, 2004. Proceedings of the 2004}, vol.~5.\hskip 1em plus 0.5em
  minus 0.4em\relax IEEE, 2004, pp. 4296--4301.

\bibitem{pasqualetti2014controllability}
F.~Pasqualetti, S.~Zampieri, and F.~Bullo, ``Controllability metrics,
  limitations and algorithms for complex networks,'' \emph{Control of Network
  Systems, IEEE Transactions on}, vol.~1, no.~1, pp. 40--52, 2014.

\bibitem{liu2013observability}
Y.-Y. Liu, J.-J. Slotine, and A.-L. Barab{\'a}si, ``Observability of complex
  systems,'' \emph{Proceedings of the National Academy of Sciences}, vol. 110,
  no.~7, pp. 2460--2465, 2013.

\bibitem{vicsek1995novel}
T.~Vicsek, A.~Czir{\'o}k, E.~Ben-Jacob, I.~Cohen, and O.~Shochet, ``Novel type
  of phase transition in a system of self-driven particles,'' \emph{Physical
  review letters}, vol.~75, no.~6, p. 1226, 1995.

\bibitem{jadbabaie2003coordination}
A.~Jadbabaie, J.~Lin, \emph{et~al.}, ``Coordination of groups of mobile
  autonomous agents using nearest neighbor rules,'' \emph{Automatic Control,
  IEEE Transactions on}, vol.~48, no.~6, pp. 988--1001, 2003.

\bibitem{tsitsiklis1984problems}
J.~N. Tsitsiklis, ``Problems in decentralized decision making and
  computation.'' DTIC Document, Tech. Rep., 1984.

\bibitem{tsitsiklis1986distributed}
J.~N. Tsitsiklis, D.~P. Bertsekas, M.~Athans, \emph{et~al.}, ``Distributed
  asynchronous deterministic and stochastic gradient optimization algorithms,''
  \emph{IEEE transactions on automatic control}, vol.~31, no.~9, pp. 803--812,
  1986.

\bibitem{murray2003consensus}
R.~Olfati-Saber and R.~M. Murray, ``Consensus protocols for networks of dynamic
  agents,'' in \emph{Proceedings of the 2003 American Controls Conference},
  2003.

\bibitem{olfati2007consensus}
R.~Olfati-Saber, J.~A. Fax, and R.~M. Murray, ``Consensus and cooperation in
  networked multi-agent systems,'' \emph{Proceedings of the IEEE}, vol.~95,
  no.~1, pp. 215--233, 2007.

\bibitem{mou2013fixed}
S.~Mou and A.~S. Morse, ``A fixed-neighbor, distributed algorithm for solving a
  linear algebraic equation,'' in \emph{Control Conference (ECC), 2013
  European}.\hskip 1em plus 0.5em minus 0.4em\relax IEEE, 2013, pp. 2269--2273.

\bibitem{mou2014distributed}
S.~Mou, ``Distributed control of multi-agent systems,'' Ph.D. dissertation,
  YALE UNIVERSITY, 2014.

\bibitem{liu2013asynchronous}
J.~Liu, S.~Mou, and A.~S. Morse, ``An asynchronous distributed algorithm for
  solving a linear algebraic equation,'' in \emph{Decision and Control (CDC),
  2013 IEEE 52nd Annual Conference on}.\hskip 1em plus 0.5em minus 0.4em\relax
  IEEE, 2013, pp. 5409--5414.

\bibitem{anderson2015decentralized}
B.~Anderson, S.~Mou, A.~S. Morse, and U.~Helmke, ``Decentralized gradient
  algorithm for solution of a linear equation,'' \emph{arXiv preprint
  arXiv:1509.04538}, 2015.

\bibitem{mou2015distributed}
S.~Mou, J.~Liu, and A.~S. Morse, ``A distributed algorithm for solving a linear
  algebraic equation,'' \emph{IEEE Transactions on Automatic Control}, vol.~60,
  no.~11, pp. 2863--2878, 2015.

\bibitem{neudecker1969note}
H.~Neudecker, ``A note on kronecker matrix products and matrix equation
  systems,'' \emph{SIAM Journal on Applied Mathematics}, vol.~17, no.~3, pp.
  603--606, 1969.

\bibitem{russo2013contraction}
G.~Russo, M.~Di~Bernardo, and E.~D. Sontag, ``A contraction approach to the
  hierarchical analysis and design of networked systems,'' \emph{Automatic
  Control, IEEE Transactions on}, vol.~58, no.~5, pp. 1328--1331, 2013.

\bibitem{fiedler1973algebraic}
M.~Fiedler, ``Algebraic connectivity of graphs,'' \emph{Czechoslovak
  Mathematical Journal}, vol.~23, no.~2, pp. 298--305, 1973.

\bibitem{chung1997spectral}
F.~R. Chung, \emph{Spectral graph theory}.\hskip 1em plus 0.5em minus
  0.4em\relax American Mathematical Soc., 1997, vol.~92.

\bibitem{diestel2005graph}
R.~Diestel, ``Graph theory. 2005,'' \emph{Grad. Texts in Math}, 2005.

\bibitem{watts1998collective}
D.~J. Watts and S.~H. Strogatz, ``Collective dynamics of
  ‘small-world’networks,'' \emph{nature}, vol. 393, no. 6684, pp. 440--442,
  1998.

\bibitem{barabasi1999emergence}
A.-L. Barab{\'a}si and R.~Albert, ``Emergence of scaling in random networks,''
  \emph{science}, vol. 286, no. 5439, pp. 509--512, 1999.

\bibitem{erdos1960evolution}
P.~Erd\H{o}s and A.~R{\'e}nyi, ``On the evolution of random graphs,''
  \emph{Publ. Math. Inst. Hungar. Acad. Sci}, vol.~5, pp. 17--61, 1960.

\bibitem{wormald1999models}
N.~C. Wormald, ``Models of random regular graphs,'' \emph{London Mathematical
  Society Lecture Note Series}, pp. 239--298, 1999.

\bibitem{strohmer2009randomized}
T.~Strohmer and R.~Vershynin, ``A randomized kaczmarz algorithm with
  exponential convergence,'' \emph{Journal of Fourier Analysis and
  Applications}, vol.~15, no.~2, pp. 262--278, 2009.

\bibitem{demmel1988probability}
J.~W. Demmel, ``The probability that a numerical analysis problem is
  difficult,'' \emph{Mathematics of Computation}, vol.~50, no. 182, pp.
  449--480, 1988.

\bibitem{kaczmarz1937angenaherte}
S.~Kaczmarz, ``Angen{\"a}herte aufl{\"o}sung von systemen linearer
  gleichungen,'' \emph{Bulletin International de l’Academie Polonaise des
  Sciences et des Lettres}, vol.~35, pp. 355--357, 1937.

\bibitem{gower2015randomized}
R.~M. Gower and P.~Richt{\'a}rik, ``Randomized iterative methods for linear
  systems,'' \emph{SIAM Journal on Matrix Analysis and Applications}, vol.~36,
  no.~4, pp. 1660--1690, 2015.

\bibitem{galantai2005rate}
A.~Gal{\'a}ntai, ``On the rate of convergence of the alternating projection
  method in finite dimensional spaces,'' \emph{Journal of mathematical analysis
  and applications}, vol. 310, no.~1, pp. 30--44, 2005.

\bibitem{dai2014randomized}
L.~Dai, M.~Soltanalian, and K.~Pelckmans, ``On the randomized kaczmarz
  algorithm,'' \emph{IEEE SIGNAL PROCESSING LETTERS}, vol.~21, no.~3, 2014.

\end{thebibliography}

\end{document}